\newif\ifpageone
\def \d {\mathrm{d}}
\def \A {\mathcal{A}}
\def \AA {\mathcal{A}}
\def \B {\mathcal{B}}
\def \C {\mathcal{C}}
\def \M {\mathcal{M}}
\def \F {\mathcal{F}}
\def \P {\mathcal{P}}
\def \X {\mathcal{X}}
\def \Y {\mathcal{Y}}
\def\Z{\mathbb{Z}}
\def \oo {\mathbf{0}}
\def \ee {\mathbf{e}}
\def \mm {\mathbf{m}}
\def \MM {\mathbf{M}}
\def \nn {\mathbf{n}}
\def \xx {\mathbf{x}}
\def\aalpha{\bm{\alpha}} 
\newcommand{\R}{\mathbb{R}}
\renewcommand{\P}{\mathbb{P}}
\newcommand{\E}{\mathbb{E}}
\renewcommand{\l}{\left}
\renewcommand{\r}{\right}
\newcommand{\aw}{\overset{\leftarrow}{\omega}}
\newcommand{\avt}{\overset{\leftarrow}{\vartheta}}
\newcommand{\aMM}{\overset{\leftarrow}{M}}
\newtheorem{theorem}{Theorem}[section]
\newtheorem{proposition}[theorem]{Proposition} 
\newtheorem{corollary}[theorem]{Corollary}
\newtheorem{definition1}[theorem]{Definition} 
\newtheorem{remark1}[theorem]{Remark} 
\long\def\symbolfootnote[#1]#2{\begingroup\def\thefootnote{\hspace*{-1mm}\fnsymbol{footnote}}\footnote[#1]{#2}\endgroup}
\title{\bf 
Approximate filtering via discrete dual processes
}
\author{\normalsize
\textsc{Guillaume Kon Kam King},
\emph{\normalsize Universit\'e Paris-Saclay - INRAE}\\[2mm]
\textsc{\normalsize Andrea Pandolfi},
\emph{\normalsize Bocconi University}\\[2mm]
\textsc{\normalsize Marco Piretto},
\emph{\normalsize BrandDelta}\\[2mm]
\textsc{\normalsize Matteo Ruggiero\footnote{Corresponding author: ESOMAS Dept., Corso Unione Sovietica 218/bis, 10134, Torino, Italy; matteo.ruggiero@unito.it}},
\emph{\normalsize University of Torino and Collegio Carlo Alberto}}
\date{\today}
\begin{document}
\maketitle
\thispagestyle{empty}

\begin{quote}
\footnotesize

We consider the task of filtering a dynamic parameter evolving as a diffusion process, given data collected at discrete times from a likelihood which is conjugate to the reversible law of the diffusion, when a generic dual process on a discrete state space is available. Recently, it was shown that duality with respect to a death-like process implies that the filtering distributions are finite mixtures, making exact filtering and smoothing feasible through recursive algorithms with polynomial complexity in the number of observations. Here we provide general results for the case where the dual is a regular jump continuous-time Markov chain on a discrete state space, which typically leads to filtering distribution given by countable mixtures indexed by the dual process state space. We investigate the performance of several approximation strategies on two hidden Markov models driven by Cox--Ingersoll--Ross and Wright--Fisher diffusions, which admit duals of birth-and-death type, and compare them with the available exact strategies based on death-type duals and with bootstrap particle filtering on the diffusion state space as a general benchmark.

\textbf{Keywords:} Bayesian inference; Diffusion; Duality; Hidden Markov models; Particle filtering; Smoothing.
\end{quote}


\section{Introduction}

Hidden Markov models are widely used statistical models for time series that assume an unobserved Markov process $(X_{t})_{t\geq 0}$, or hidden \emph{signal}, driving the process that generates the observations $(Y_{t_i})_{i=0,\dots, n}$, e.g., by specifying the dynamics of one or more parameters of the observation density $f_{X_{t}}(y)$, called \emph{emission distribution}. See \cite{CMR05} for a general treatment of hidden Markov models. In this framework, the first task is to estimate the trajectory of the signal given observations collected at discrete times $0=t_{0}<t_{1}<\cdots<t_{n}=T$, which amounts to performing sequential Bayesian inference by computing the so-called \emph{filtering distributions} $p(x_{t_{i}}|y_{t_{0}},\ldots,y_{t_{i-1}})$, i.e., the marginal distributions of the signal at time $t_{i}$ conditional on observations collected up to time $t_{i-1}$. Originally motivated by real-time tracking and navigation systems and pioneered by \cite{K60,KB61}, classical and widely known explicit results for this problem include: the \emph{Kalman--Bucy} filter, when both the signal and the observation process are formulated in a gaussian linear system; the \emph{Baum--Welch} filter, when the signal has a finite state-space as the observations are categorical; the \emph{Wonham} filter, when the signal has a finite state-space and the observations are Gaussian. These scenarios allow the derivation of so-called \emph{finite-dimensional filters}, i.e., a sequence of filtering distributions whose explicit identification is obtained through a parameter update based on the collected observations and on the time intervals between the collection times. In such case, the resulting computational cost of the algorithm increases linearly with the number of observations. 
Other explicit results include \cite{S81, FR90, FV98, RS01, G03, GK04, CGK11}. Outside these classes, explicit solutions are difficult to obtain, and their derivation typically relies on \emph{ad hoc} computations. This is especially true when the map $x\mapsto f_{x}$ is non-linear and when the signal transition kernel is known up to a series expansion, often intractable, as is the case for many widely used stochastic models. When exact solutions are not available, one must typically make use of approximate strategies, whose state of the art is most prominently based on extensions of the Kalman and particle filters. See, for example, \cite{BC09,CP20}.

A somewhat weaker but useful notion with respect to that of a finite-dimensional filter was formulated in \cite{CG06}, who introduced the concept of \emph{computable filter}. This extends the former class to a larger class of filters whose marginal distributions are \emph{finite mixtures} of elementary kernels, rather than single kernels. Unlike the former case, such a scenario entails a higher computational cost, usually polynomial in the number of observations, but avoids the infinite-dimensionality typically implied by series expansion of the signal transition kernel. See \cite{CG06,CG09} for two examples.

Recently, \cite{PR14} derived sufficient conditions for computable filtering based on duality. A \emph{dual process} is a process $D_{t}$ which enjoys the identity 
\begin{equation}\label{duality identity}
\E[h(X_{t},d)|X_{0}=x]=\E[h(x,D_{t})|D_{0}=d].
\end{equation} 
Here the expectation on the left-hand side is taken with respect to the transition law of the signal $X_{t}$, and that on the right hand side with respect to that of $D_{t}$, while the class of functions $h(x,d)$ which satisfy the above identity are called duality functions. See \cite{JK14} for a review and for the technical details we have overlooked here. The use of duality is largely established in probability and statistical physics, see for example \cite{EK86,M99,BEG00,Gea07,EG09,Gea09a,Gea09b,Eea10,O10,Cea15,Fea21,Gea22,AS05,Bea09, Dea23, HM11, Fea18, HW07}.
The use of duality for inference was initiated by \cite{PR14}, who showed that for a reversible signal whose marginal distributions are conjugate to the emission distribution (i.e., the Bayesian update at a fixed $t$ can be computed in closed-form), computable filtering is guaranteed if the stochastic part of the dual process evolves on a finite state space. Examples of such scenario were given for non-linear hidden Markov models involving signals driven by Cox--Ingersoll--Ross (CIR) and $K$-dimensional Wright--Fisher (WF) diffusions, for which recursive formulae for the filtering distributions were derived. Along similar lines, duality was exploited for computable \emph{smoothing} in \cite{KKK21}, whereby the signal is also conditioned on data collected at later times, and for nonparametric hidden Markov models driven by Fleming--Viot and Dawson--Watanabe diffusions in \cite{PRS16, ALR21, ALR22}.


In this paper, we investigate filtering problems for hidden Markov models when the dual process takes the more general form of a continuous-time Markov chain on a discrete state space. This is of interest for example in some population genetic models with selection \citep{BEG00} or interaction \cite{Aea19,Fea21} whose known dual processes are of birth-and-death (B\&D) type, whose specific filtering problems are currently under investigation by some of the authors. When the dual process evolves in a countable state space, the filtering distributions can in general be expected to be countably infinite mixtures. This leads to inferential procedures which are not \emph{computable} in the sense specified above, since the computation of the filtering distribution can no longer be exact. 
It is thus natural to wonder how the inferential procedures obtained in such duality-based scenario, possibly aided by some suitable approximation strategies, would perform. 

The paper is organized as follows. 
In Section \ref{sec:conditions} we identify sufficient conditions for filtering based on discrete duals and provide a general description of the filtering operations in this setting. In Section \ref{sec:algorithms}, we apply these results to devise practical algorithms which allow to evaluate in recursive form filtering and smoothing distributions under this formulation.  
Section \ref{sec:CIR} and \ref{sec:WF} investigate hidden Markov models driven by a Cox--Ingersoll--Ross diffusion,  which admits a dual given by a one-dimensional B\&D  process, and by a $K$-dimensional Wright--Fisher diffusions, which is shown to admit a dual given by a $K$-dimensional Moran model. The latter can be seen as a multidimensional B\&D process with constant total population size. Section \ref{sec:experiments} discusses several approximation strategies used to implement the above algorithms with these dual processes, and compares their performance with exact filtering based on the results in \cite{PR14} and with a bootstrap particle filter as a general benchmark. Finally, we conclude with some brief remarks.


\section{Filtering via discrete dual processes}\label{sec:conditions}

Consider a hidden signal $(X_{t})_{t\ge0}$ given by a diffusion process on $\X\subset \R^{K}$, for $K\ge1$. 
This takes here the role of a temporally evolving parameter which is the main target of estimation.
 Observations $Y_{t_{i}}\in \Y\subset \R^{D}$, $D\ge1$, are collected at discrete times $0=t_{0}<t_{1}<\cdots<t_{n}=T$ with distribution $Y_{t_{i}}\overset{\text{ind}}{\sim} f_{x}(\cdot)$, given $X_{t_{i}}=x$. Knowledge of the signal state $x$ thus makes the observations conditionally independent.  Given an observation $Y=y$, define the \emph{update operator} $\phi_y$ acting on densities $\xi$ on $\X$ by
\begin{equation}\label{update operator}
\phi_y(\xi)(x) := \frac{f_x(y)\xi(x)}{\mu_{\xi}(y)}, \qquad 
\mu_{\xi}(y) := \int_{\mathcal{X}} f_x(y)\xi(x).
\end{equation}
Here and later we assume all densities of interest exist with respect to an appropriate dominating measure. In \eqref{update operator}, $\xi$ acts as a \emph{prior} distribution on the signal state, which encodes the current knowledge (or lack thereof) on $X_{t}$, whereas $\mu_{\xi}(y)$ is interpreted as the marginal likelihood of a data point $y$ when $X_{t}$ has distribution $\xi$. The update operator thus amounts to an application of Bayes' theorem for conditioning $\xi$ on a new observation $y$, leading to the updated density $\phi_y(\xi)$. For example, if $\xi$ is a $\text{Beta}(a,b)$ density on $[0,1]$, and $f_{x}$ is $\text{Bern}(x)$, then $\phi_y(\xi)$ is $\text{Beta}(a+y,b+1-y)$ as in a classical Beta-Bernoulli update.

Define also the \emph{propagation operator} $\psi_t$ by
\begin{equation}\label{propagation operator}
\psi_t(\xi)(x') := 
\int_{\mathcal{X}}\xi(x)P_t(x'|x)\d x'.
\end{equation} 
where $P_{t}$ is the transition density of the signal. Here $\psi_t(\xi)$ is the probability density at time $t$ obtained by propagating forward the density $\xi$ of the signal at time 0 by means of the signal semigroup. 

We will make three assumptions, the first two of which are the same as in \cite{PR14}. 

\begin{quote}
\textbf{Assumption 1} (Reversibility). The signal $X_{t}$ is reversible with respect to the density $\pi$, i.e., $\pi(x)P_t(x'|x)=\pi(x')P_t(x|x')$. 
\end{quote}

See the discussion in \cite{PR14} on the possibility of relaxing the above assumption. 
For $K\in \Z_{+}$, define now the space of multi-indices $\M=\Z_{+}^{K}$ to be
\begin{equation}\label{space M}
\M=\{\mm =(m_{1},\ldots,m_{K}):\ m_{j}\in\Z_{+},\text{ for }j=1,\ldots,K\},
\end{equation} 
whose origin is denoted $\oo=(0,\ldots,0)$. 

\begin{quote}
\textbf{Assumption 2} (Conjugacy). For $\Theta\subset \R^{l}$, $l\in \Z_{+}$, let $h:\X\times \M\times \Theta\rightarrow \R_{+}$ be such that $\sup_{x\in \X}h(x,\mm,\theta)<\infty$ for all $\mm\in \M,\theta\in \Theta$ and $h(x,\oo,\theta')=1$ for some $\theta'\in \Theta$. Then $f_{x}(\cdot)$ is conjugate to distributions in the family
\begin{equation}\label{family F}\nonumber
\F=\{g(x,\mm,\theta)=h(x,\mm,\theta)\pi(x),\ \mm\in \M,\theta\in\Theta\},
\end{equation} 
i.e., there exist functions $t:\Y\times \M\rightarrow \M$ and $T:\Y\times \Theta\rightarrow\Theta$ such that if $X_{t}\sim g(x,\mm,\theta)$ and $Y_{t}|X_{t}=x\sim f_{x}$, we have $X_{t}|Y_{y}=y\sim g(x,t(y,\mm),T(y,\theta))$.
\end{quote}

Here $g(x,\mm,\theta)$ takes the role of ``current'' prior distribution, i.e., the prior information on the signal state, possibly based on past observations through previous conditioning and propagations,  and $g(x,t(y,\mm),T(y,\theta))$ takes the role of the posterior, i.e., $g(x,\mm,\theta)$ conditional on a data point $y$ observed from $f_{x}$ for $X_{t}=x$. The functions $t(y,\mm),T(y,\theta)$ provide the transformations that update the parameters based on $y$. In absence of data, the condition $h(x,\oo,\theta')=1$ reduces $g(x,\mm,\theta)$ to $\pi(x)$.

The third assumption weakens Assumption 3 in \cite{PR14} by assuming the dual process has finite activity on a discrete state space, and possibly has a deterministic companion.

\begin{quote}
\textbf{Assumption 3} (Duality). Given a deterministic process $\Theta_{t}\in \Theta$ and a regular jump continuous-time Markov chain $M_{t}$ on $\Z_{+}^{K}$ with transition probabilities
\begin{equation}\label{transition probabilities}
p_{\mm,\nn}(t;\theta):=\P(M_{t}=\nn |  M_{0}=\mm,\Theta_{0}=\theta),
\end{equation}
equation \eqref{duality identity} holds with $D_{t}=(M_{t},\Theta_{t})$ and $h$ as in Assumption 2.
\end{quote}

The following result provides a full description of the propagation and update steps which allow to compute the filtering distribution.

\begin{proposition}\label{prop:prop&update}
Let  Assumptions 1-3 hold, and let $\sum_{\mm\in \M}w_{\mm}g(x,\mm,\theta)$ be a countable mixture with $\sum_{\mm\in \M}w_{\mm}=1$. Then, for $\psi_t$ as in \eqref{propagation operator} we have
\begin{equation}\label{forward propagation}
\psi_t
\bigg(\sum_{\mm\in \M}w_{\mm}g(x,\mm,\theta)\bigg)=\sum_{\nn\in \M}w'_{\nn}(t)g(x,\nn,\Theta_{t}),
\end{equation} 
where
\begin{equation}\label{weights rearrangement after propagation}
\begin{aligned}
w'_{\nn}(t)=\sum_{\mm\in \M}w_{\nn}p_{\mm,\nn}(t;\theta), \quad 
\end{aligned}
\end{equation} 
and $p_{\mm,\nn}(t;\theta)$
are as in \eqref{transition probabilities}. Furthermore, for $\phi_{y}$ as in \eqref{update operator}, we have
\begin{equation}\label{update operation}
    \phi_{y} \bigg(\sum_{\mm\in \M}w_{\mm}g(x,\mm,\theta)\bigg)=
    \sum_{\mm\in \M}\hat w_{\nn,\theta}(y)g(x,t(y,\mm),T(y,\theta))
\end{equation} 
where $\hat w_{\mm,\theta}(y)\propto w_{\mm}\mu_{\mm,\theta}(y)$
and
\begin{equation}\label{marginals}
\begin{aligned}
\mu_{\mm,\theta}(y):=\int_{\X}f_{x}(y)g(x, \mm, \theta)\d x.
\end{aligned}
\end{equation} 
\end{proposition}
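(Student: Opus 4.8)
\emph{Proof sketch.} The plan is to treat the two claims separately, each reducing to a single-component computation that is then extended to the countable mixture by linearity.

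For the propagation identity \eqref{forward propagation}, I would first use the linearity of $\psi_t$ to reduce to computing $\psi_t(g(\cdot,\mm,\theta))$ for a single multi-index $\mm$. Writing $g(x,\mm,\theta)=h(x,\mm,\theta)\pi(x)$ and inserting this into \eqref{propagation operator}, the key move is to apply the reversibility of Assumption 1, $\pi(x)P_t(x'\mid x)=\pi(x')P_t(x\mid x')$, to pull $\pi(x')$ outside the integral and turn the remaining integral into $\E[h(X_t,\mm,\theta)\mid X_0=x']$. At this point the duality identity \eqref{duality identity} of Assumption 3 converts this signal expectation into a dual expectation $\E[h(x',M_t,\Theta_t)\mid M_0=\mm,\Theta_0=\theta]$; since $\Theta_t$ is deterministic, the latter is simply $\sum_{\nn\in\M}p_{\mm,\nn}(t;\theta)\,h(x',\nn,\Theta_t)$, with $p_{\mm,\nn}$ as in \eqref{transition probabilities}. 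Multiplying back by $\pi(x')$ recovers $\sum_{\nn} p_{\mm,\nn}(t;\theta)\,g(x',\nn,\Theta_t)$, and summing against the weights $w_\mm$ and interchanging the order of the two summations yields \eqref{weights rearrangement after propagation}.

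For the update identity \eqref{update operation}, I would work directly from the definition of $\phi_y$ in \eqref{update operator}. The normalising constant factorises as $\mu_\xi(y)=\sum_{\mm}w_\mm\,\mu_{\mm,\theta}(y)$ with $\mu_{\mm,\theta}$ as in \eqref{marginals}, again by linearity of the integral. The central observation is that conjugacy (Assumption 2) is precisely Bayes' theorem applied to a single component: $f_x(y)\,g(x,\mm,\theta)=\mu_{\mm,\theta}(y)\,g(x,t(y,\mm),T(y,\theta))$. Substituting this into the numerator of $\phi_y$ and dividing by $\mu_\xi(y)$ exhibits the result as a mixture over the shifted indices $t(y,\mm)$ with weights proportional to $w_\mm\,\mu_{\mm,\theta}(y)$, which is exactly $\hat w_{\mm,\theta}(y)$.

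The only genuine care required is analytic rather than conceptual: since the state space is now countably infinite, I must justify the interchanges of summation with integration (and of the two sums over $\M$) used above. All quantities involved---the weights $w_\mm$, the transition probabilities $p_{\mm,\nn}(t;\theta)$, the densities, and the marginals $\mu_{\mm,\theta}(y)$---are nonnegative, so Tonelli's theorem licenses every interchange. That the output of \eqref{forward propagation} is again a bona fide mixture follows from conservativeness of the regular jump chain, $\sum_{\nn\in\M}p_{\mm,\nn}(t;\theta)=1$, which gives $\sum_{\nn}w'_{\nn}(t)=1$; the analogous normalisation for \eqref{update operation} is built into the proportionality defining $\hat w_{\mm,\theta}(y)$. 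This is the step where the finite-activity (regularity) hypothesis of Assumption 3 is actually used, ruling out loss of mass through explosion of the dual.
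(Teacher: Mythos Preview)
Your proof is correct and follows essentially the same route as the paper: linearity of $\psi_t$ reduces to a single component, which the paper handles by citing Proposition~2.2 of \cite{PR14} while you spell out the reversibility-plus-duality mechanism explicitly, and the update step is the same mixture-of-Bayes computation in both. Your added care about Tonelli and conservativeness of the dual chain is sound and goes slightly beyond what the paper writes out.
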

\begin{proof}
First observe that $\psi_t(g(x,\mm,\theta))=\sum_{\nn\in \M}p_{\mm,\nn}(t;\theta)g(x,\nn,\Theta_{t})$, which follows similarly to Proposition 2.2 in \cite{PR14}. Then the claim follows by linearity using the fact that
\begin{equation}\label{oper_prop}
\psi_t \bigg( \sum_{i\ge1}w_i\xi_i \bigg)  = \sum_{i\ge1} w_i \psi_t (\xi_i)
\end{equation}
so that
\begin{equation}\nonumber
\begin{aligned}
\psi_t &\,\bigg(\sum_{\mm\in \M}w_{\mm}g(x,\mm,\theta)\bigg)
=
\sum_{\mm\in \M}w_{\mm}\psi_{t}(g(x,\mm,\theta))\\
=&\,
\sum_{\mm\in \M}w_{\mm}\sum_{\nn\in \M}p_{\mm,\nn}(t;\theta)g(x,\nn,\Theta_{t})
=
\sum_{\nn\in \M}\sum_{\mm\in \M}w_{\mm}p_{\mm,\nn}(t;\theta)g(x,\nn,\Theta_{t})
\end{aligned}
\end{equation} 
Using now the fact that
\begin{equation}\label{mixture update}
\phi_y \bigg( \sum_{i\ge1}w_i\xi_i\bigg) = \sum_{i\ge1}      \frac{w_i \mu_{\xi_i}(y)}{\sum_j w_j \mu_{\xi_j}(y)} \phi_y(\xi_i),\qquad 
\end{equation} 
we also find that
\begin{equation}
    \phi_{y} \bigg(\sum_{\mm\in \M}w_{\mm}g(x,\mm,\theta)\bigg)=
    \sum_{\mm\in \M}\frac{w_{\mm}\mu_{\mm,\theta}(y)}{\sum_{\nn\in \M}w_{\nn}\mu_{\nn,\theta}(y)}g(x,t(y,\mm),T(y,\theta))
\end{equation}
where
\begin{equation}
\mu_{\mm,\theta}(y)=
\int_{\mathcal{X}} f_x(y)g(x,\mm,\theta)\d x
=\int_{\mathcal{X}} f_x(y)h(x,\mm,\theta)\pi(x)\d x.
\end{equation}
\end{proof}

The expression \eqref{forward propagation}, together with \eqref{weights rearrangement after propagation}, provides a general recipe on how to compute the forward propagation of the current marginal distribution of the signal $g(x,\mm,\theta)$, based on the transition probabilities of the dual continuous-time Markov chain. Since the update operator \eqref{update operator} can be easily applied to the resulting distribution, Proposition \ref{prop:prop&update} then shows that under these assumptions all filtering distributions are countable mixtures of elementary kernels indexed by the state space of the dual process, with mixture weights proportional to the dual process transition probabilities $p_{\mm,\nn}(t;\theta)$. When these transition probabilities happen to give positive mass only to points $\{\nn\in \M: \ \nn\le \mm\}$, as is the case for a pure-death process, then the right-hand side of \eqref{forward propagation} reduces to a finite sum, and one can construct an exact filter with a computational cost that is polynomial in the number of observations, as shown in \cite{PR14}. 

The above approach can be seen as an alternative to deriving the filtering distribution of the signal by leveraging on a spectral expansion of the transition function $P_{t}$ in \eqref{propagation operator}, which typically requires ad hoc computations and does not lend itself easily to explicit update operations through \eqref{update operator}. Note also that expressions like \eqref{forward propagation} can be used, by taking appropriate limits of $p_{\mm,\nn}(t;\theta)$ as $t\rightarrow 0$, to identify the transition kernel of the signal $P_{t}$ itself, see, e.g., \cite{BEG00,PR14,Gea22}. 


\section{Recursive formulae for filtering and smoothing}\label{sec:algorithms}

In order to translate Proposition \ref{prop:prop&update} into practical recursive formulae for filtering and smoothing, we are going to assume for simplicity of exposition that the time intervals between successive data collections $t_{i}-t_{i-1}$ equal $\Delta$ for all $i$. For ease of reading, we will therefore use the symbol $P_{\Delta}$ instead of $P_{t_{i}-t_{i-1}}$ for the signal transition function over the interval $\Delta=t_{i}-t_{i-1}$. We will also use the established notation whereby $i|0:i-1$ indicates that the argument refers to time $t_{i}=i\Delta$, and we are conditioning on the data collected at times from $0$ to $t_{i-1}=(i-1)\Delta$.

Define 
the  \emph{filtering density}
\begin{equation}\label{filtering distr general}
\nu_{i|0: i}(x_{i}):=p(x_{i}| y_{0:i})\propto \int_{\X^{i}} p(x_{0:i}, y_{0:i}) \d x_{0:i-1}, 
\end{equation}
i.e., the law of the signal at time $t_{i}$ given data up to time $t_{i}$, obtained by integrating out the past trajectory. Define also the \emph{predictive density}
\begin{equation}\label{predictive general}
\nu_{i+1|0:i}(x_{i}):=p(x_{i+1} | y_{0:i}) =\int_{\X} p(x_{i} |  y_{0:i})P_{\Delta}(x_{i+1} |  x_{i})\d x_{i},
\end{equation} 
i.e, the marginal density of the signal at time $t_{i+1}$, given data up to time $t_{i}$. This can be expressed recursively as a function of the previous \emph{filtering density} $p(x_{i} |  y_{0:i})$, as displayed.
Finally, define 
the marginal \emph{smoothing density}
\begin{equation}\label{smoothing distr general}
 \nu_{i|0:n}(x_{i}):=p(x_{i} |  y_{0:n}) \propto \int_{\X^{n}} p(x_{0:n}, y_{0:n}) \d x_{0:i-1}\d x_{i+1:n},
\end{equation}
where the signal is evaluated at time $t_{i}$ conditional on all available data. The first two distributions above are natural objects of inferential interest, whereas the latter is typically used to improve previous estimates once additional data become available. 
Finally, for $\Theta_{\Delta}$ as in Assumption 3 and $t(\cdot,\cdot),T(\cdot,\cdot)$ as in Assumption 2, define for $i=0,\dots,n$ the quantities
\begin{equation}
\begin{aligned}
\label{vartheta and M-sets}
\vartheta_{i|0:i}:=&\,T(y_{i}, \vartheta_{i|0:i-1}),
\quad \quad \ \
\vartheta_{i|0:i-1} := \Theta_{\Delta}(\vartheta_{i-1|0:i-1}),
\quad \quad 
\vartheta_{0|0:-1} := \theta_0.
\end{aligned}
\end{equation} 
Here, $\vartheta_{i|0:i-1}$ denotes the state of the deterministic component of the dual process at time $i$, after the propagation from time $i-1$ and before updating with the datum collected at time $i$, and $\vartheta_{i|0:i}$ the state after such update.

The following Corollary of Proposition \ref{prop:prop&update} extends a result of \cite{PR14} (see also Theorem 1 in \cite{KKK21} for an easier comparison in a similar notation).

\begin{corollary}\label{prop:rec_filtering}
Let Assumptions 1-3 hold, and assume that
\[\nu_{i-1 | 0:i-1} (x) = \sum_{\mm \in \M}w_\mm^{(i-1)}g(x, \mm, \vartheta_{i-1|0:i-1}).\]
Then \eqref{predictive general} can be written, through \eqref{propagation operator}, as 
\begin{equation}\label{prediction in thm}
\begin{aligned}
 \nu_{i|0:i-1}(x) 
=&\,\psi_{\Delta}(\nu_{i-1 | 0:i-1}(x))
 = \sum_{\mm \in \M}w_\mm^{(i-1)'}g(x, \mm, \vartheta_{i|0:i-1}), \\
  w_\mm^{(i-1)'} = &\, \sum_{\nn \in \M}w_{\nn}^{(i-1)}p_{ \nn, \mm}(\Delta; \vartheta_{i-1|0:i-1}), \quad \mm \in \M,
\end{aligned}
\end{equation} 
with $p_{ \nn, \mm}(\Delta; \vartheta_{i|0:i})$ as in \eqref{transition probabilities}. Furthermore, given the observation $y_i$, \eqref{filtering distr general} can be written, through \eqref{update operator}, as
\begin{equation}\label{filtering in thm}
\begin{aligned}
 \nu_{i|0:i}(x)
 = &\, \phi_{y_i}(\nu_{i|0:i-1}(x))
= \sum_{\mm \in \M}w_\mm^{(i)} g(x, \mm, \vartheta_{i|0:i}),\\
w_{\mm}^{(i)} \propto &\,
\mu_{\nn, \vartheta_{i|0:i-1}}(y_{i})w_{\nn}^{(i-1)'}, 
\quad \mm =t(y_{i},\nn),\nn\in \M,
\end{aligned}
\end{equation} 
with $\mu_{\mm,\theta}$ as in \eqref{marginals}.
\end{corollary}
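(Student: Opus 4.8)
The plan is to obtain Corollary~\ref{prop:rec_filtering} as a direct consequence of Proposition~\ref{prop:prop&update}, simply instantiating the generic mixture identities at the specific densities appearing in the filtering recursion. Concretely, the statement is really two assertions to be proved in sequence: the propagation identity \eqref{prediction in thm}, which expresses the predictive density as the image of the filtering density under $\psi_\Delta$, and the update identity \eqref{filtering in thm}, which expresses the new filtering density as the image of the predictive density under $\phi_{y_i}$. My approach is to verify that each of these is exactly a case of \eqref{forward propagation}--\eqref{weights rearrangement after propagation} and \eqref{update operation}, respectively, with the deterministic parameter $\theta$ tracked correctly through the $\vartheta$-bookkeeping of \eqref{vartheta and M-sets}.

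First I would establish the propagation step. By hypothesis $\nu_{i-1|0:i-1}(x)=\sum_{\mm\in\M}w_\mm^{(i-1)}g(x,\mm,\vartheta_{i-1|0:i-1})$ is a countable mixture of the kernels in family $\F$ with weights summing to one, so Proposition~\ref{prop:prop&update} applies directly with $\theta=\vartheta_{i-1|0:i-1}$ and $t=\Delta$. Applying \eqref{forward propagation} gives $\psi_\Delta(\nu_{i-1|0:i-1})=\sum_{\mm\in\M}w_\mm^{(i-1)'}g(x,\mm,\Theta_\Delta(\vartheta_{i-1|0:i-1}))$ with the reindexed weights from \eqref{weights rearrangement after propagation}. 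The only thing to check is that the new deterministic parameter equals $\vartheta_{i|0:i-1}$, which is precisely the definition $\vartheta_{i|0:i-1}:=\Theta_\Delta(\vartheta_{i-1|0:i-1})$ in \eqref{vartheta and M-sets}, and that the recursive definition \eqref{predictive general} of the predictive density coincides with the action of $\psi_\Delta$ on the filtering density, which is immediate from comparing \eqref{predictive general} with \eqref{propagation operator}. This yields \eqref{prediction in thm}.

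Next I would establish the update step. The predictive density just obtained, $\nu_{i|0:i-1}(x)=\sum_{\mm\in\M}w_\mm^{(i-1)'}g(x,\mm,\vartheta_{i|0:i-1})$, is again a countable mixture of kernels in $\F$, so Proposition~\ref{prop:prop&update} applies once more, now with $\theta=\vartheta_{i|0:i-1}$ and the observation $y=y_i$. The update identity \eqref{update operation} produces the mixture $\sum_{\nn\in\M}\hat w_{\nn,\vartheta_{i|0:i-1}}(y_i)\,g(x,t(y_i,\nn),T(y_i,\vartheta_{i|0:i-1}))$ with $\hat w_{\nn,\vartheta_{i|0:i-1}}(y_i)\propto w_\nn^{(i-1)'}\mu_{\nn,\vartheta_{i|0:i-1}}(y_i)$. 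I would then reindex this sum by the transformed multi-index $\mm=t(y_i,\nn)$, identify the updated deterministic parameter $T(y_i,\vartheta_{i|0:i-1})$ with $\vartheta_{i|0:i}$ via \eqref{vartheta and M-sets}, and match the proportional weights with $w_\mm^{(i)}$, giving \eqref{filtering in thm}. As in the propagation step, one should note that $\phi_{y_i}$ acting on the predictive density reproduces the Bayes update \eqref{filtering distr general}, which follows from the definition of the update operator in \eqref{update operator}.

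I do not expect a serious obstacle, since the corollary is essentially a repackaging of Proposition~\ref{prop:prop&update} into recursive form; the content is organizational rather than analytic. The one point requiring a little care is the reindexing in the update step, where the map $\nn\mapsto t(y_i,\nn)$ need not be injective, so several source indices $\nn$ may contribute to the same target $\mm$; the weights $w_\mm^{(i)}$ should then be understood as summing the contributions $\mu_{\nn,\vartheta_{i|0:i-1}}(y_i)w_\nn^{(i-1)'}$ over all $\nn$ with $t(y_i,\nn)=\mm$, consistently with the sum in \eqref{update operation}. Keeping the $\vartheta$-bookkeeping of \eqref{vartheta and M-sets} aligned with the $\theta$-arguments fed into $\Theta_\Delta$ and $T(y_i,\cdot)$ at each stage is the only place where an indexing slip could occur, but this is routine verification rather than a genuine difficulty.
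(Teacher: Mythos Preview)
Your proposal is correct and matches the paper's approach: the paper presents this result as a corollary of Proposition~\ref{prop:prop&update} without giving a separate proof, and your write-up spells out precisely the instantiation of \eqref{forward propagation}--\eqref{weights rearrangement after propagation} and \eqref{update operation} with the $\vartheta$-bookkeeping of \eqref{vartheta and M-sets} that makes this immediate. Your remark about possible non-injectivity of $\nn\mapsto t(y_i,\nn)$ and the resulting aggregation of weights is a careful observation that the paper leaves implicit.
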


\begin{quote}
\begin{algorithm}[t]
\small
\SetAlgoLined


\KwIn{$Y_{0:n}$, $t_{0:n}$}

\KwResult{$\vartheta_{i|0:i}$, $\MM_{i|0:i}$ and $W_i = \{w _\mm ^{(i)} , \mm \in \MM _{i|0:i} \}$}

\SetKwBlock{Begin}{Initialise}{}

\Begin{

Set $\vartheta_{0|0} = T(Y_0, \theta_0)$ with $T$ as in Assumption 2\\

Set $\MM_{0|0} = \{t(Y_0, \oo)\} = \{\mm^*\}$ and $W_0 = \{1\}$ with $t$ as in  Assumption 2\\

Compute $\vartheta_{1|0}$ from $\vartheta_{0|0}$ as in \eqref{vartheta and M-sets}\\

Set $\MM^* = \B(\MM_{0|0})$ and $W^* = \{p_{\mm^*, \nn}(\Delta, \vartheta_{0|0}), \nn \in \MM^*\}$ with  $p_{\mm, \nn}$ as in \eqref{transition probabilities}

}

\For{$i$ from $1$ to $n$}{

\SetKwBlock{Begin}{Update}{}

\Begin{

Set $\vartheta_{i|0:i} = T(Y_i, \vartheta_{i|0:i-1})$\\

Set $W_{i} = \left\{\frac{w_\mm^* \mu_{\mm, \vartheta_{i|0:i-1}}(Y_i)}{\sum_{\nn \in \MM^*}w_\nn^* \mu_{\nn, \vartheta_{i}}(Y_i)}, \mm \in \MM^*\right\}$ with $\mu_{\mm, \theta}$ defined as in \eqref{marginals}

Set $\MM_{i|0:i} = \{t(Y_i, \mm), \mm \in \MM^*\}$ and update the labels in $W_i$\\
Copy $\vartheta_{i|0:i}$, $\MM_{i|0:i}$ and $W_{i}$ to be reported as the output
}

%
%
%

\SetKwBlock{Begin}{Propagation}{}

\Begin{

Compute $\vartheta_{i+1|0:i}$ from $\vartheta_{i|0:i}$\\

Set $\MM^* = \B(\MM_{i|0:i})$ and $W^* = \bigg\{\displaystyle{\sum_{\mm \in \MM_{i|0:i}}}w_{\mm}^{(i)} p_{\mm, \nn}(\Delta,\vartheta_{i|0:i}), \nn \in \MM^*\bigg\}$}}
\begin{quote}
\caption{\small Filtering}\label{algorithm}
\textbf{Note:} $\MM_{i|0:i}=\{\mm \in \mathcal{M}:\ w_\mm ^{(i)}>0\}\subset \mathcal{M}$ is the support of the weights of $\nu _{i|0:i}$;
$\B(\mm)$ denotes the states reached by the dual process from $\mm$, and $\B(\MM)$ those reached from all $\mm \in \MM$.
\end{quote}
\end{algorithm}
\end{quote}

\vspace{-3mm}
Algorithm \ref{algorithm} outlines the pseudo-code for implementing the update and propagation steps of Corollary \ref{prop:rec_filtering}.
How to use these results efficiently can depend on the model at hand. When the transition probabilities $p_{\mm,\nn}(t;\theta)$ are available in closed form, their use could lead to the best performance, but can also at times face numerical instability issues (as is the case pointed out in Section \ref{sec:CIR} below).
When the transition probabilities $p_{\mm,\nn}(t;\theta)$ are not available in closed form, one can approximate them by simulating $N$ replicates of the dual component $M_{t}$, and then regroup probability masses according to the arrival states as done in \eqref{prediction in thm}.
In our framework, the dual process is typically easier to simulate than the original process, given its discrete state space. For instance, pure-death or B\&D processes are easily simulated using a Gillespie algorithm \citep{G07}, whereby one alternates sampling waiting times and jump transitions for the embedded chain. Depending on the dual process, there might also be more efficient simulation strategies.

A different type of approximation of the propagation step \eqref{prediction in thm} in Corollary \ref{prop:rec_filtering} can be based on pruning the transition probabilities or the arrival weights under a given threshold, followed by a renormalisation of the weights.
Both this approximation strategy and that outlined above assign positive weights only to a finite subset of $\M$, hence they overcome the infinite dimensionality of the dual process state space. 
\cite{KKK21} showed that the latter strategy allows to control the approximation error while retaining almost entirely the distributional information, thus affecting the inference negligibly. 
In the next sections we will investigate such strategies for two hidden Markov models driven by Cox--Ingersoll--Ross  and $K$-dimensional Wright--Fisher diffusions.

Now, in order to describe the marginal smoothing densities \eqref{smoothing distr general}, we need an additional assumption and some further notation.

\begin{quote}
\textbf{Assumption 4} For $h$ as in Assumption 3, there exist functions $d:\M^{2}\to\M$ and $e:\Theta^{2}\to\Theta$ such that for all $x\in\X,\ \mm,\mm' \in\M,\ \theta, \theta' \in \Theta$
\begin{equation}\label{hh_stab}
 h(x, \mm, \theta)h(x, \mm', \theta') 
 =C_{\mm, \mm', \theta, \theta'} h(x, d(\mm, \mm'), e(\theta, \theta')),
\end{equation}
where $C_{\mm, \mm', \theta, \theta'}$ is constant in $x$.
\end{quote}

Denote by $\avt_{i}, \avt_{i}'$ the quantities defined in 
\eqref{vartheta and M-sets} computed backwards. Equivalently, these are computed as in \eqref{vartheta and M-sets} with data in reverse order, i.e.~using $y_{n:0}$ in place of $y_{0:n}$, namely
\begin{equation}\label{backward lambda e vartheta}
\begin{aligned}
\avt_{i|i+1:T}=&\,\Theta_{\Delta}(\avt_{i+1|i+1:T}), \quad \quad 
\avt_{i|i:T}=T(y_{i},\avt_{i|i+1:T}),\quad \quad 
\avt_{T|T}=T(y_{T},\theta_{0})
\end{aligned}
\end{equation} 

The following result extends Proposition 3 and Theorem 4 of \cite{KKK21}:
\begin{proposition}\label{prop: smoothing}
Let Assumptions 1-4 hold, and let $\nu _0= \pi$. Then, for $0\leq i\leq n-1$, we have
\begin{equation}
 p(x_{i} |  y_{0:n})=
\sum_{\mm \in \M,\ \nn \in \M}
w_{\mm,\nn}^{(i)}g(x_{i}, d(\mm,\nn), e(\avt_{i|i+1:n},\vartheta_{i|0:i})),
\end{equation}
 with 
\begin{equation}
\begin{aligned}
w_{\mm,\nn}^{(i)}
\propto&\, \overleftarrow w^{(i+1)}_{\mm} w_\nn^{(i)}C_{\mm, \nn, \avt_{i|i+1:n},\vartheta_{i|0:i}},\\
\aw _\mm ^{(i+1)}=&\,\sum _{\nn \in \M} \aw _{\nn}^{(i+2)}\mu _{\nn, \avt _{i+1|i+2:n}}(y_{i+1})p_{t(y_{i+1}, \nn),\mm }(\Delta ; \avt _{i+1|i+1:n})
\end{aligned}
\end{equation} 
 $w_\nn^{(i)}$ as in \eqref{filtering in thm} and $C_{\mm, \nn, \avt_{i|i+1:n},\vartheta_{i|0:i}}$ as in \eqref{hh_stab}.
\end{proposition}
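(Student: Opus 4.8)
The plan is to combine the standard forward--backward factorisation of the smoothing density with a backward (reverse-time) analogue of Corollary \ref{prop:rec_filtering}, and then to collapse the resulting product of two duality functions into a single one by means of Assumption 4. First I would establish the decomposition. Since the observations are conditionally independent given the signal and $(X_t)$ is Markov, for $0\le i\le n-1$ one has
\[
p(x_i\mid y_{0:n})\propto p(x_i\mid y_{0:i})\,p(y_{i+1:n}\mid x_i)=\nu_{i|0:i}(x_i)\,\beta_i(x_i),
\]
where $\beta_i(x_i):=p(y_{i+1:n}\mid x_i)$ is the \emph{backward information function} and the proportionality constant $p(y_{0:i})/p(y_{0:n})$ does not depend on $x_i$. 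The first factor is supplied directly by Corollary \ref{prop:rec_filtering}, namely $\nu_{i|0:i}(x_i)=\sum_{\nn\in\M}w^{(i)}_\nn\,g(x_i,\nn,\vartheta_{i|0:i})=\pi(x_i)\sum_{\nn\in\M}w^{(i)}_\nn\,h(x_i,\nn,\vartheta_{i|0:i})$, with $w^{(i)}_\nn$ as in \eqref{filtering in thm}.

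The core of the argument is to show, by backward induction on $i$, that
\[
\beta_i(x_i)\;\propto\;\sum_{\mm\in\M}\aw^{(i+1)}_\mm\,h(x_i,\mm,\avt_{i|i+1:n}),
\]
with $\aw^{(i+1)}_\mm$ obeying the recursion in the statement and $\avt$ as in \eqref{backward lambda e vartheta}. The induction mirrors the forward pass in reverse time through $\beta_i(x_i)=\int_\X P_\Delta(x_{i+1}\mid x_i)f_{x_{i+1}}(y_{i+1})\beta_{i+1}(x_{i+1})\,\d x_{i+1}$, with base case $\beta_n\equiv1=h(\cdot,\oo,\theta_0)$, which is admissible by the normalisation $h(x,\oo,\theta_0)=1$ of Assumption 2. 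For the inductive step I would insert the representation of $\beta_{i+1}$, apply conjugacy in the form $f_x(y)h(x,\nn,\theta)=\mu_{\nn,\theta}(y)\,h(x,t(y,\nn),T(y,\theta))$ to absorb the emission factor, turning $\nn$ into $t(y_{i+1},\nn)$ and sending $\avt_{i+1|i+2:n}$ to $\avt_{i+1|i+1:n}$, and then evaluate the integral against $P_\Delta$ using the duality identity \eqref{duality identity}, i.e. $\int_\X P_\Delta(x_{i+1}\mid x_i)h(x_{i+1},\kk,\theta)\,\d x_{i+1}=\sum_{\mm\in\M}p_{\kk,\mm}(\Delta;\theta)\,h(x_i,\mm,\Theta_\Delta(\theta))$. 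Taking $\kk=t(y_{i+1},\nn)$, $\theta=\avt_{i+1|i+1:n}$ and using $\Theta_\Delta(\avt_{i+1|i+1:n})=\avt_{i|i+1:n}$ from \eqref{backward lambda e vartheta}, the coefficient of $h(x_i,\mm,\avt_{i|i+1:n})$ is exactly the stated recursion for $\aw^{(i+1)}_\mm$.

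Finally I would multiply the two mixtures. Writing the product as $\pi(x_i)\sum_{\mm,\nn}\aw^{(i+1)}_\mm w^{(i)}_\nn\,h(x_i,\mm,\avt_{i|i+1:n})h(x_i,\nn,\vartheta_{i|0:i})$ and applying Assumption 4 to each pairwise product gives $h(x_i,\mm,\avt_{i|i+1:n})h(x_i,\nn,\vartheta_{i|0:i})=C_{\mm,\nn,\avt_{i|i+1:n},\vartheta_{i|0:i}}\,h(x_i,d(\mm,\nn),e(\avt_{i|i+1:n},\vartheta_{i|0:i}))$, whereupon $\pi(x_i)$ recombines with this single $h$ into $g(x_i,d(\mm,\nn),e(\avt_{i|i+1:n},\vartheta_{i|0:i}))$. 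Reading off the coefficients and renormalising over $(\mm,\nn)\in\M^2$ then yields the claimed mixture with $w^{(i)}_{\mm,\nn}\propto \aw^{(i+1)}_\mm w^{(i)}_\nn\,C_{\mm,\nn,\avt_{i|i+1:n},\vartheta_{i|0:i}}$.

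The main obstacle I anticipate is the bookkeeping in the backward induction: one must track carefully how the reverse-time maps $T(y_{i+1},\cdot)$ and $\Theta_\Delta(\cdot)$ act on the deterministic component in \eqref{backward lambda e vartheta}, how the index shift $\nn\mapsto t(y_{i+1},\nn)$ interacts with the dual transition probabilities $p_{t(y_{i+1},\nn),\mm}$, and how the marginal factors $\mu_{\nn,\avt_{i+1|i+2:n}}(y_{i+1})$ accumulate into $\aw^{(i+1)}_\mm$. Ensuring that the base case and the suppressed normalising constants are consistent across the induction, so that the omitted proportionality factors genuinely cancel in the final renormalisation over $\M^2$, is the part that demands the most care; by contrast, the forward--backward decomposition and the closing application of Assumption 4 are essentially mechanical.
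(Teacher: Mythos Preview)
Your proposal is correct and follows essentially the same approach as the paper: the forward--backward factorisation $\nu_{i|0:n}\propto p(y_{i+1:n}\mid x_i)\,\nu_{i|0:i}$, a backward induction on the cost-to-go function $p(y_{i+1:n}\mid x_i)=\sum_\mm\aw^{(i+1)}_\mm h(x_i,\mm,\avt_{i|i+1:n})$ via conjugacy and duality (which the paper delegates to Proposition~3 of \cite{KKK21}), and the closing product of $h$-functions via Assumption~4 (delegated to Theorem~4 of \cite{KKK21}). Your write-up in fact spells out more of the mechanics than the paper does; the one cosmetic difference is that the paper states the cost-to-go expansion as an equality with unnormalised weights $\aw^{(i+1)}_\mm$ absorbing the likelihood factors, whereas you carry a proportionality sign---either convention is fine since the final renormalisation over $\M^2$ removes the ambiguity.
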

\begin{proof}
Note that Bayes' Theorem and conditional independence allow to write  \eqref{smoothing distr general} as
\begin{equation}\label{smoothing_expression}
\nu _{i|0:n}(x_i) = p(x_i| y_{0:n}) 
\propto p(y_{i+1:n}| x_i)
\nu_{i|0:i}(x_i)
\end{equation} 
where the right-hand side involves the filtering distribution, available from Corollary \ref{prop:rec_filtering}, and the  so called \emph{cost-to-go function} $p(y_{i+1:n}| x_i)$ (sometimes called information filter), which is the likelihood of future observations given the current signal state. Along the same lines as Proposition 3 in \cite{KKK21} we find that
\begin{equation}
p(y_{i+1:n} |  x_i) = \sum _{\mm \in \M}\aw _\mm ^{(i+1)}h(x_i, \mm , \avt _{i|i+1:n})
\end{equation}
with $\aw _\mm ^{(i+1)}$ as in the statement. The main claim can now be proved along the same lines as Theorem 4 in \cite{KKK21}. 
\end{proof}

The main difference between the above result and Theorem 4 in \cite{KKK21} lies in the fact that the support of the weights $\{\aw _\mm ^{(i+1)}, \mm \in \M\}$ (which in \cite{KKK21} is denoted by $\aMM_{i|i+1:n}$) can be countably infinite and coincide with the whole of $\M$. Indeed, which points of $\M$ have positive weight are determined by the transition probabilities of the dual process, which in the present framework is no longer assumed to make only downward moves in $\M$. Section \ref{sec:experiments} will deal with this possibly infinite support for a concrete implementation of the inferential strategy.


\section{Cox--Ingersoll--Ross hidden Markov models}\label{sec:CIR}

The Cox--Ingersoll--Ross diffusion, also known as the square-root process,
is widely used in financial mathematics for modelling short-term interest rates and stochastic volatility, see \cite{CIR85,CS92,F94,H93,GY03}. It also belongs to the class of continuous-state branching processes with immigration, arising as the large-population scaling limit of certain branching Markov chains \citep{KW71} and as the time-evolving total mass of a Dawson--Watanabe branching measure-valued diffusion \citep{EG93b}.
		
Let $X_{t}$ be a CIR diffusion on $\R_{+}$ that solves the one-dimensional SDE
\begin{equation}\label{SDE}
    \d X_t = \l( \delta \sigma^2 - 2 \gamma X_t \r)dt + 2\sigma \sqrt{X_t} d B_t,\quad \quad X_{0}\ge0,
\end{equation}
where $\delta,\gamma,\sigma>0$, which is reversible with respect to the Gamma density $\pi=\text{Ga}(\delta/2, \gamma / \sigma^2)$.
The following proposition identifies a B\&D  process as dual to the CIR diffusion.

\begin{proposition}\label{prop: CIR duality}
Let $X_{t}$ be as in \eqref{SDE}, let $M_{t}$ be a \emph{B\&D} process on $\Z_{+}$ which jumps from $m$ to $m+1$ at rate $\lambda_m = 2\sigma^2 (\delta/2 + m)(\theta - \gamma /\sigma^2)$  and to $m-1$ at rate $\mu_m = 2 \sigma^2 \theta m$, and let 
\begin{equation} \label{CIR duality function}
    h(x, m, \theta) = \frac{\Gamma(\delta/2)}{\Gamma(\delta/2+m)} \Big(\frac{\gamma}{\sigma^2}\Big)^{-\delta/2}\theta^{\delta/2 +m}x^m e^{-(\theta - \gamma/\sigma^2)x}.
\end{equation}
Then \eqref{duality identity} holds with $D_{t}=M_{t}$. 
\end{proposition}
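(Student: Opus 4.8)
The plan is to establish the duality identity \eqref{duality identity} via the standard generator criterion for Markov process duality: if the diffusion generator acting on the $x$-variable and the B\&D generator acting on the $m$-variable coincide when applied to $h(x,m,\theta)$, then the two semigroups are dual through $h$ and \eqref{duality identity} follows. From \eqref{SDE} the CIR generator is
\[
\mathcal{L}^{X}f(x) = (\delta\sigma^{2}-2\gamma x)f'(x) + 2\sigma^{2}x\,f''(x),
\]
acting on the $x$-argument of $h$, while the generator of the B\&D process $M_{t}$ is
\[
\mathcal{L}^{M}g(m) = \lambda_{m}\big(g(m+1)-g(m)\big) + \mu_{m}\big(g(m-1)-g(m)\big),
\]
acting on the $m$-argument, with $\lambda_{m},\mu_{m}$ as in the statement. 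The target is the pointwise identity $\mathcal{L}^{X}h(x,m,\theta)=\mathcal{L}^{M}h(x,m,\theta)$ for all $x\ge0$, $m\in\Z_{+}$.

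For the left-hand side I would write $h(x,m,\theta)=c(m,\theta)\,x^{m}e^{-\beta x}$ with $\beta=\theta-\gamma/\sigma^{2}$ and $c(m,\theta)$ the factor independent of $x$, compute $h_{x}$ and $h_{xx}$, and collect $\mathcal{L}^{X}h$ as a linear combination of $x^{m-1},x^{m},x^{m+1}$, all times $c(m,\theta)\,e^{-\beta x}$. For the right-hand side the key simplification is that, thanks to the $\Gamma(\delta/2+m)^{-1}\theta^{\delta/2+m}x^{m}$ structure, the one-step ratios are elementary,
\[
\frac{h(x,m+1,\theta)}{h(x,m,\theta)}=\frac{\theta x}{\delta/2+m},\qquad
\frac{h(x,m-1,\theta)}{h(x,m,\theta)}=\frac{\delta/2+m-1}{\theta x},
\]
so that after substituting $\lambda_{m},\mu_{m}$ the factors $(\delta/2+m)$ and $\theta m$ cancel and $\mathcal{L}^{M}h$ is again $c(m,\theta)\,e^{-\beta x}$ times a combination of $x^{m-1},x^{m},x^{m+1}$.

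It then remains to match the three coefficients. The $x^{m-1}$ coefficients coincide identically, the $x^{m+1}$ coefficients agree once one uses $\gamma+\sigma^{2}\beta=\sigma^{2}\theta$, and the only substantive check is the $x^{m}$ coefficient, where the required equality
\[
-\delta\sigma^{2}\beta-2\sigma^{2}\beta m-2\sigma^{2}\theta m
= -\delta\sigma^{2}\beta-2\gamma m-4\sigma^{2}\beta m
\]
reduces, after cancellation, precisely to the defining relation $\sigma^{2}\theta=\gamma+\sigma^{2}\beta$. This is exactly where the specific rates $\lambda_{m},\mu_{m}$ and the choice $\beta=\theta-\gamma/\sigma^{2}$ enter, and it is the decisive (if routine) algebraic step.

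The main obstacle is not this algebra but the passage from the infinitesimal identity $\mathcal{L}^{X}h=\mathcal{L}^{M}h$ to the integrated identity \eqref{duality identity}. I would invoke the standard duality theorem (the generator criterion in \cite{JK14}, along the lines of Proposition 2.2 in \cite{PR14}) after verifying its hypotheses: that $x\mapsto h(x,m,\theta)$ lies in the domain of $\mathcal{L}^{X}$ and $m\mapsto h(x,m,\theta)$ in that of $\mathcal{L}^{M}$, that the relevant expectations are finite, and that the boundary behaviour is benign — here $\mu_{0}=0$ keeps $M_{t}$ in $\Z_{+}$, while $\lambda_{m}\ge0$ requires the natural restriction $\theta\ge\gamma/\sigma^{2}$, i.e.\ $\beta\ge0$. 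Since the rates grow only linearly in $m$ and the CIR semigroup is well behaved, $M_{t}$ does not explode and the criterion applies, delivering \eqref{duality identity} with $D_{t}=M_{t}$.
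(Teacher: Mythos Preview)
Your proposal is correct and follows essentially the same route as the paper: both compute the CIR generator on $h$ in the $x$-variable, reorganise into a linear combination of $h(x,m-1),h(x,m),h(x,m+1)$ (equivalently, of $x^{m-1},x^{m},x^{m+1}$ times $c(m,\theta)e^{-\beta x}$), identify the result with the B\&D generator applied in the $m$-variable, and then invoke the generator duality criterion of \cite{JK14}. Your additional remarks on domain membership, non-explosion, and the constraint $\theta\ge\gamma/\sigma^{2}$ ensuring $\lambda_{m}\ge0$ are accurate refinements that the paper leaves implicit.
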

\begin{proof}
The infinitesimal generator associated to \eqref{SDE} is
\begin{equation}\nonumber
\A f(x)=( \delta \sigma^2 - 2 \gamma x )f'(x)+ 2\sigma^{2} x f''(x),
\end{equation} 
for $f:\R_{+}\rightarrow \R$ vanishing at infinity.
Letting $h(x,m) = h(x,m, \theta)$ denote \eqref{CIR duality function} omitting the dependence on $\theta$ to make notations lighter, a direct computation yields 
\begin{align}
    \mathcal{A} h(\cdot, m)(x) 
    =&\,(\delta \sigma^2 - 2 \gamma x)\Big(mx^{m-1}-x^{m}(\theta-\gamma/\sigma^{2})\Big)\frac{\Gamma(\delta/2)}{\Gamma(\delta/2+m)} \Big(\frac{\gamma}{\sigma^2}\Big)^{-\delta/2}\theta^{\delta/2 +m} e^{-(\theta - \gamma/\sigma^2)x}
\\
&\,    + 2\sigma^{2} x
\Big(m(m-1)x^{m-2}+x^{m}(\theta-\gamma/\sigma^{2})^{2}-2mx^{m-1}(\theta-\gamma/\sigma^{2})\Big) \\&\, 
\times\frac{\Gamma(\delta/2)}{\Gamma(\delta/2+m)}\Big(\frac{\gamma}{\sigma^2}\Big)^{-\delta/2}\theta^{\delta/2 +m} e^{-(\theta - \gamma/\sigma^2)x}\\
=&\,
\frac{\delta\sigma^{2}m\theta}{\delta/2+m-1}h(x,m-1)
+2\gamma(\theta-\gamma/\sigma^{2})\frac{\delta/2+m}{\theta}h(x,m+1)\\
&\,-[2\gamma m+\delta\sigma^{2}(\theta-\gamma/\sigma^{2})] h(x,m)
+2\sigma^{2}m(m-1)\frac{\theta}{\delta/2+m-1}h(x,m-1)\\
&\,+2\sigma^{2}(\theta-\gamma/\sigma^{2})^{2}\frac{\delta/2+m}{\theta}h(x,m+1)
    -4\sigma^{2}m(\theta-\gamma/\sigma^{2})h(x,m)
\\
    =&\, 2 \sigma^2 \theta m h(x, m-1) + 2\sigma^2 (\delta/2 + m)(\theta - \gamma /\sigma^2) h(x, m+1)  \notag\\
&\,    - [2\gamma m+\sigma^{2}(\delta+4m)(\theta-\gamma/\sigma^{2})]h(x, m).
\end{align}
where it can be checked that
\begin{align}
2\sigma^{2}\theta m+2\sigma^{2}(\delta/2+m)(\theta-\gamma/\sigma^{2})
=2\gamma m+\sigma^{2}(\delta+4m)(\theta-\gamma/\sigma^{2}).
\end{align}
Hence the r.h.s.~equals
\begin{equation}\nonumber
\B g(m)=\lambda_{m}[g(m+1)-g(m)]+\mu_{m}[g(m-1)-g(m)]
\end{equation} 
with $g(\cdot):=h(x,\cdot)$, $\lambda_{m}=2\sigma^2 (\delta/2 + m)(\theta - \gamma /\sigma^2)$, and $\mu_{m}=2 \sigma^2 \theta m$, which  is the infinitesimal generator of a B\&D  process with rates $\lambda_{m},\mu_{m}$. 
The claim now follows from Proposition 1.2 in \cite{JK14}.
\end{proof}


Assign now prior $\nu_{0}=\text{Ga}(\delta/2,\gamma/\sigma^{2})$ to $X_{0}$, and assume Poisson observations are collected at equally spaced intervals of length $\Delta$. Specifically, $Y|X_{t}=x \overset{\text{iid}}{\sim} \text{Po}(\tau x)$, for some $\tau>0$. 
By the well-known conjugacy to Gamma priors, we have that $X_{t}|Y=y\sim \text{Ga}(\delta/2+y, \gamma / \sigma^2+\tau)$. Without loss of generality, we can set $\tau=1$, which allows to interpret the update of the gamma rate parameter as the size of the conditioning data set.
The filtering algorithm starts by first updating the prior $\nu_{0}$ to $\nu_{0|0}:=\phi_{Y_{0}}(\nu_{0})$. If we observe $Y_{0}=(Y_{0,1},\ldots,Y_{0,k})$ at time $0$, then $\nu_{0|0}$ is the law of $X_{0}|\sum_{j=1}^{k}y_{0,j}=m\sim \text{Ga}(\delta/2+m, \gamma / \sigma^2+k)$. Then $\nu_{0|0}$ is  propagated forward for a $\Delta$ time interval, yielding $\nu_{1|0}:=\psi_{\Delta}(\nu_{0|0})$.
In light of Proposition \ref{prop: CIR duality}, an application of \eqref{forward propagation} to  $\nu_{0|0}$ yields the infinite mixture
\begin{equation}\label{CIR propagation}
\psi_\Delta \left( \text{Ga}(\delta/2+m, \gamma / \sigma^2+k)\right) = \sum_{n\ge0} p_{m, n}(\Delta) \text{Ga}(\delta/2+n, \gamma / \sigma^2+k),
\end{equation} 
where $p_{m, n}(t)$ are the transition probabilities of $M_{t}$ in Proposition \ref{prop: CIR duality}.  Hence, the law of the signal is indexed by $\Z_{+}$, the state space of the dual process. While after the update at time 0 mass one is assigned to the sum of the observations $\sum_{j=1}^{k}y_{0,j}=m$, after the propagation the mass is spread over the whole $\Z_{+}$ by the effect of the dual process. 
We then observe $Y_{1}\sim f_{X_{1}}$ at time 1, which is used to update $\nu_{1|0}$ to $\nu_{1|1}$ and has the effect of shifting the probability masses of the mixture weights. For example, the weight $p_{m,n}(\Delta)$ in \eqref{CIR propagation} is assigned to $n\in \Z_{+}$, but after the update based on $Y_{1}=(Y_{1,1},\ldots,Y_{1,k'})$  it will be assigned to $n+m'$ if $\sum_{j=1}^{k'}y_{1,j}=m'$, on top of being transformed according to \eqref{mixture update}. We then propagate forward again and proceed analogously.

When the current distribution of the signal, after the update, is given by a mixture of type $\sum_{m\in \Z_{+}}w_{m}\text{Ga}(\delta/2+m, \gamma / \sigma^2+k)$, it is enough to rearrange the mixture weights after the propagation step as done in \eqref{weights rearrangement after propagation}. 
%

The main difference with qualitatively similar equations found in \cite{KKK21} is now given by the transition probabilities $p_{m, n}(t)$ in \eqref{CIR propagation}, which are those of the B\&D process in Proposition \ref{prop: CIR duality}. Before tackling the problem of how to use the above expressions for inference, we try to provide further intuition of the extent and implications of such differences.
To this end, consider the simplified parameterization $\alpha=\delta/2, \beta=\gamma/\sigma^{2},  \sigma^{2}=1/2, \tau=1$, whereby 
one can check that the embedded chain of the B\&D  process of Proposition \ref{prop: CIR duality} has jump probabilities 
\begin{equation}\nonumber
\begin{aligned}
p_{m,m+1}=\frac{k(\alpha+m)}{k(\alpha+m)+m(\beta+k)},\quad \quad 
p_{m,m-1}=1-p_{m,m+1}.
\end{aligned}
\end{equation} 
Here $m,k$ are the same as in the left-hand side of \eqref{CIR propagation}, so $m/k$ is the sample mean.
It is easily verified that $p_{m, m+1}<p_{m, m-1}$ if $m/k>\alpha/\beta$ and viceversa.
Therefore, the dual evolves on $\Z_{+}$ so that it reverts $m/k$ to the prior mean $\alpha/\beta$. 
Indeed, the dual has Negative Binomial ergodic distribution $\text{NBin}\left(\alpha,\beta/(\beta+k)\right)$, 
whose mean is $k\alpha/\beta$, i.e., such that $m/k$ on average coincides with $\alpha/\beta$.

Recall now that the dual process elicited in \cite{PR14} for the CIR model is $D_{t}=(M_{t},\Theta_{t})$, with $M_{t}$ a pure-death process with rates from $m$ to $m-1$ equal to $2\sigma^{2}\theta$ and $\Theta_{t}$ a deterministic process that solves $\d \Theta_{t}/\d t=-2\sigma^{2}\Theta_{t}(\Theta_{t}-\gamma/\sigma^{2})$, $\Theta_{0}=\theta$. This dual has a single ergodic state given by $(0,\beta)$ (note that \cite{PR14} uses a slightly different parameterization, where the ergodic state $(0,\beta)$ means that, in the limit for $t\rightarrow \infty$, the gamma parameters are the prior parameters). 
In particular, as $t\rightarrow \infty$, this entails the convergence of $p_{m,n}(t)$ in \eqref{CIR propagation} to 1 if $n=0$ and 0 elsewhere. Whence the strong ergodic convergence $\psi_{t}(g(x,m,\theta))\rightarrow \pi$ as $t\rightarrow \infty$, whereby the effect of the observed data become progressively negligible as $t$ increases.
One could then argue that in the long run, the filtering strategy based on the pure-death dual process in \cite{PR14} completely forgets the collected data. As a consequence, one could expect filtering with long-spaced observations (relative to the forward process autocorrelation) to be similar to using independent priors at each data collection point. On the other hand, the B\&D dual can be thought as not forgetting but rather spreading around the probability mass in such a way as to preserve convergence of the empirical mean to the prior mean. It is not obvious a priori which of these two scenarios could be more beneficial in terms of filtering, hence in Section \ref{sec: CIR experiments} we provide numerical experiments for comparing the performance of strategies based on these different duals.
%

In view of such experiments, note that the transition probabilities of the above B\&D dual are in principle available in closed form (cf.~\cite{B64,CS12}), but their computation is  prone to numerical instability.  
Alternatively, we can approximate the transition probabilities $p_{m,n}(t)$ in \eqref{CIR propagation} by drawing $N$ sample paths of the dual started in $m$ and use the empirical distribution of the arrival points.
This can in principle be done through the Gillespie algorithm \citep{G07}, which alternates sampling waiting times and jumps of the embedded chain. A faster strategy can be achieved by writing the B\&D  rates in Proposition \ref{prop: CIR duality} as $\lambda_{m}=\lambda m+\beta$ and $\mu_{m}=\mu m$ with
\begin{equation}\label{algrates}
\begin{aligned}
\lambda =2\sigma^2 (\theta -\gamma/\sigma^2),\qquad \beta=\sigma^2\delta(\theta -\gamma/\sigma^2),\qquad 
\mu = 2\sigma^2\theta,
\end{aligned}
\end{equation}
where $\lambda,\mu$ represent the per capita birth and death rate and $\beta$ is the immigration rate. Then write $M_t = A_t + B_t$ where $A_t$ is the population size of the descendant of autochthonous individuals (already in the population at $t=0$), and $B_t$ the descendants of the immigrants. 
These rates define a linear B\&D process, whereby \cite{T18} suggests simulating $A_t$ by drawing, given $A_0=i$,
\begin{equation}\label{tav18}
    F\sim \text{Bin}(i, g(t)), \qquad A_t \sim \text{NBin}(F, h(t))+F,
\end{equation}
with $h(t)=(\lambda-\mu)/(\lambda \exp\{(\lambda-\mu)t\}-\mu)$ and $g(t)=h(t)\exp\{(\lambda-\mu)t\}$, 
with the convention NBin$(0,p)= \delta_0$.
Let now $N_s$ be the number of immigrants up to time $s$, which follows a simple Poisson process with rate $\beta$, so given $N_{t}$ the arrival times are uniformly distributed on $[0,t]$.
Once in the population, the lineage of each immigrating individual follows again a B\&D  process and can be simulated using \eqref{tav18} starting at $i=1$. Summing the numerosity of each immigrant family at time $t$ yields $B_t$.



\section{Wright--Fisher hidden Markov models}\label{sec:WF}

The $K$-dimensional WF diffusion is a widely studied classical model in population genetics (see \citep{EK86,EG93a,E09,H07} and references therein), recently used also in a statistical framework \citep{PR14,KKK21}. See also \cite{Cea15} for connections with statistical physics.
 It takes values in the simplex 
\begin{equation}\nonumber
\Delta_{K}=\bigg\{\xx\in[0,1]^{K}: \sum_{1\le i\le K}x_{i}=1\bigg\}
\end{equation} 
and, in the population genetics interpretation, it models the temporal evolution of $K$ proportions of types in an underlying large population. Its infinitesimal generator on $C^{2}(\Delta_{K})$ is
\begin{equation}\label{WF generator}
\AA =  \frac{1}{2}\sum_{i,j=1}^{K}x_{i}(\delta_{ij}-x_{j})\frac{\partial^{2}}{\partial x_{i}\partial x_{j} }
+\frac{1}{2}\sum_{i=1}^{K}(\alpha_i - \theta x_i)\frac{\partial}{\partial x_{i}}
\end{equation} 
for $\aalpha=(\alpha_{1},\ldots,\alpha_{K})\in \R_{+}^{K}$, $\theta=\sum_{i=1}^{K}\alpha_{i}$, and its reversible measure is the Dirichlet distribution whose density with respect to Lebesgue measure is 
\begin{equation}\label{Dirichlet distribution}
\pi_{\aalpha}(\xx)=\frac{\Gamma(\theta)}{\prod_{i=1}^{K}\Gamma(\alpha_{i})}x_{1}^{\alpha_{1}-1}\cdots x_{K}^{\alpha_{K}-1}, \quad x_{K}=1-\sum_{i=1}^{K-1}x_{i}.
\end{equation} 
See for example \cite{EK86}, Chapter 10. The transition density of this model is (cf., e.g., \cite{EG93a}, eqn.~(1.27))
\begin{equation}\label{WF transition}
p_{t}(\xx,\xx')=\sum_{m=0}^{\infty}d_{m}(t)
\sum_{\mm\in \Z_{+}^{K}:|\mm|=m}\text{MN}(\mm; m,\xx)\pi_{\aalpha+\mm}(\xx'),
\end{equation} 
where $\text{MN}(\mm; m,\xx)=\binom{m}{m_{1},\ldots,m_{K}}\prod_{i=1}^{K}x_{i}^{m_{i}}$, and where $d_{m}(t)$ are the transition probabilities of the block counting process of Kingman's coalescent on $\Z_{+}$, which has an entrance boundary at $\infty$. Cf., e.g., \cite{EG93a}, eqn.~(1.12).

It is well known that a version of Kingman's typed coalescent with mutation is dual to the WF diffusion. This can be seen as a death process on $Z_{+}^{K}$ which jumps from $\mm$ to $\mm-\ee_{i}$ at rate
\begin{equation}\label{kingman's rate}
q_{\mm,\mm-\ee_{i}}=m_{i}(\theta+|\mm|-1)/2.
\end{equation} 
Here $\ee_{i}$ is the canonical vector in the $i$-th direction.
See, for example, \cite{EG09,GS09,Eea10}; see also \cite{PR14}, Section 3.3. The above death process with transitions $d_{m}(t)$ is indeed the process that counts the surviving blocks of the typed version without keeping track of which types have been removed. 

Recall now that a Moran model with $N$ individuals of $K$ types is a particle process with overlapping generations whereby at discrete times a uniformly chosen individual is removed and another, uniformly chosen from the remaining individuals, produces one offspring of its own type, leaving the total population size constant. See, e.g., \cite{E09}. In the presence of mutation, upon reproduction, the offspring can mutate to type $j$ at parent-independent rate $\alpha_{j}$. The generator of such process on the set $B(\Z_{+}^{K})$ of bounded functions on $\Z_{+}^{K}$ can be written in terms of the multiplicities of types $\nn\in \Z_{+}^{K}$ as 
\begin{equation}\label{moran generator}
\B f(\nn)
=\frac{1}{2}\sum_{1\le i\ne j\le K}n_{i}(\alpha_{j}+n_{j})f(\nn-\ee_{i}+\ee_{j})
-\frac{1}{2}\sum_{1\le i\ne j\le K}n_{i}(\alpha_{j}+n_{j})f(\nn),
\end{equation} 
where an individual of type $i$ is removed at rate $n_{i}$, the number of individuals of type $i$, and is replaced by an individual of type $j$ at rate $\alpha_{j}+n_{j}$. 

The following proposition extends a result in \cite{Cea15} (cf.~Section 5) and shows that the above Moran model is dual to the WF diffusion with generator \eqref{WF generator}.

\begin{proposition}\label{prop: WF duality}
Let $X_{t}$ have generator \eqref{WF generator}, let $N_{t}\in \Z_{+}^{K}$ be a Moran model which from $\nn$ jumps to $\nn-\ee_{i}+\ee_{j}$ at rate $n_i(\alpha_j + n_j)/2$, and let 
\begin{equation}\label{eq: duality function Moran-WF}
h(\xx,\nn)=\frac{\Gamma(\theta+|\nn|)}{\Gamma(\theta)}\prod_{i=1} ^K\frac{\Gamma\left(\alpha_i\right)}{\Gamma\left(\alpha_i+n_{i}\right)}x_{i}^{n_{i}}, \quad \quad 
\theta= \sum _{i=1}^K \alpha _i.
\end{equation}
Then \eqref{duality identity} holds with $D_{t}=N_{t}$ and $h$ as above. 
\end{proposition}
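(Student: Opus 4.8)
The plan is to establish the duality via the standard generator criterion, exactly as in the proof of Proposition \ref{prop: CIR duality}: I will show that the WF generator \eqref{WF generator} acting on $h(\cdot,\nn)$ in the $\xx$ variable coincides with the Moran generator \eqref{moran generator} acting on $h(\xx,\cdot)$ in the $\nn$ variable, i.e.
\[
\AA h(\cdot,\nn)(\xx)=\B h(\xx,\cdot)(\nn),
\]
and then invoke Proposition 1.2 in \cite{JK14} to upgrade this infinitesimal identity to the semigroup duality \eqref{duality identity} with $D_t=N_t$.

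First I would write $h(\xx,\nn)=C(\nn)\prod_{i=1}^K x_i^{n_i}$, where $C(\nn)=\frac{\Gamma(\theta+|\nn|)}{\Gamma(\theta)}\prod_{i=1}^K\frac{\Gamma(\alpha_i)}{\Gamma(\alpha_i+n_i)}$, and compute $\AA h(\cdot,\nn)$. Using $\partial_i\prod_k x_k^{n_k}=n_i x_i^{-1}\prod_k x_k^{n_k}$ together with the analogous second derivatives, I would separate the diffusion part of \eqref{WF generator} into its diagonal ($i=j$) and off-diagonal ($i\ne j$) contributions and combine these with the drift part. After collecting terms, this reduces, writing $m=|\nn|$, to
\[
\AA h(\cdot,\nn)(\xx)=\frac{h(\xx,\nn)}{2}\Big[\sum_{i=1}^K n_i(\alpha_i+n_i-1)x_i^{-1}+m(1-m-\theta)\Big].
\]

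Next I would compute the Moran side. The key elementary identity is the ratio $\frac{h(\xx,\nn-\ee_i+\ee_j)}{h(\xx,\nn)}=\frac{\alpha_i+n_i-1}{\alpha_j+n_j}\,\frac{x_j}{x_i}$, which follows from \eqref{eq: duality function Moran-WF} because the jump $\nn\mapsto\nn-\ee_i+\ee_j$ leaves $|\nn|$ (hence the factor $\Gamma(\theta+|\nn|)$) unchanged and only shifts the $i$-th and $j$-th Gamma and power factors. Substituting this into \eqref{moran generator} gives $\B h(\xx,\cdot)(\nn)=\frac{h(\xx,\nn)}{2}\sum_{i\ne j}\big[n_i(\alpha_i+n_i-1)\frac{x_j}{x_i}-n_i(\alpha_j+n_j)\big]$.

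The decisive step, and the main obstacle, is to reconcile the two expressions, since naively the Moran side carries $x_j/x_i$ cross-terms absent from the diffusion side. Here I would use the simplex constraint $\sum_j x_j=1$, so that $\sum_{j\ne i}x_j=1-x_i$; this converts $\sum_{i\ne j}n_i(\alpha_i+n_i-1)\frac{x_j}{x_i}$ into $\sum_i n_i(\alpha_i+n_i-1)x_i^{-1}-\sum_i n_i(\alpha_i+n_i-1)$, matching the $x_i^{-1}$ terms of the diffusion side exactly. It then remains to check that the $x$-independent remainders agree, i.e.\ that $-\sum_i n_i(\alpha_i+n_i-1)-\sum_{i\ne j}n_i(\alpha_j+n_j)$ equals $m(1-m-\theta)$; this is a short computation using $\sum_{j\ne i}(\alpha_j+n_j)=(\theta+m)-(\alpha_i+n_i)$ and $\sum_i n_i=m$. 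I expect the bookkeeping of these constant terms, together with the correct invocation of $\sum_i x_i=1$ (needed precisely because \eqref{WF generator} is written in all $K$ redundant coordinates), to be the only genuinely delicate part; everything else is routine differentiation and collection of terms, after which the conclusion follows from Proposition 1.2 in \cite{JK14} as in Proposition \ref{prop: CIR duality}.
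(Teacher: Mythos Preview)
Your proposal is correct and follows essentially the same strategy as the paper: verify the generator identity $\AA h(\cdot,\nn)(\xx)=\B h(\xx,\cdot)(\nn)$ by direct computation and then invoke Proposition~1.2 of \cite{JK14}. The only cosmetic difference is bookkeeping: the paper first rewrites $2\AA$ using the simplex constraint $1-x_i=\sum_{j\ne i}x_j$ so that the computation of $2\AA h(\xx,\nn)$ immediately produces monomials $\xx^{\nn-\ee_i+\ee_j}$ that can be regrouped into $h(\xx,\nn-\ee_i+\ee_j)$, whereas you compute $\AA h$ and $\B h$ separately, reduce both to the common form $\frac{h}{2}\big[\sum_i n_i(\alpha_i+n_i-1)x_i^{-1}+m(1-m-\theta)\big]$, and invoke $\sum_j x_j=1$ only at the matching step. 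Both routes are equivalent and equally rigorous.
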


\begin{proof}
From \eqref{WF generator}, since $\theta=\sum_{i=1}^{K}\alpha_{i}$, we can write
\begin{align}
2\AA 
=&\,  \sum_{1\le i\le K}x_{i}(1-x_{i})\frac{\partial^{2}}{\partial x_{i}^{2}}-\sum_{1\le i\ne j\le K}x_{i}x_{j}\frac{\partial^{2}}{\partial x_{i}\partial x_{j}} 
+\sum_{1\le i\le K}(\alpha_i(1- x_{i})-x_{i}\sum_{1\le j\le K,j\ne i}\alpha_{j})\frac{\partial}{\partial x_{i}}\\
=&\,  \sum_{1\le i\ne j\le K}x_{i}x_{j}\frac{\partial^{2}}{\partial x_{i}^{2}}
-\sum_{1\le i\ne j\le K}x_{i}x_{j}\frac{\partial^{2}}{\partial x_{i}\partial x_{j}}
+\sum_{1\le i\le K}\alpha_i\sum_{1\le j\le K,j\ne i}x_{j}\frac{\partial }{\partial x_{i}}
-\sum_{1\le i\ne j\le K}\alpha_{j}\frac{\partial }{\partial x_{i}}.
\end{align}
Then one can check that
\begin{align}
2\AA h(\xx,\nn)
=&\,\sum_{1\le i\ne j\le K}n_{i}(\alpha_{i}+n_{i}-1)\frac{\Gamma(\theta+|\nn|)}{\Gamma(\theta)}\xx^{\nn-\ee_{i}+\ee_{j}}\prod_{h=1} ^K\frac{\Gamma\left(\alpha_h\right)}{\Gamma\left(\alpha_h+n_{h}\right)}\\
&\,-\sum_{1\le i\ne j\le K}n_{i}(\alpha_{j}+n_{j})\xx^{\nn}\frac{\Gamma(\theta+|\nn|)}{\Gamma(\theta)}
\prod_{h=1} ^K\frac{\Gamma\left(\alpha_h\right)}{\Gamma\left(\alpha_h+n_{h}\right)}\\
=&\,\sum_{1\le i\ne j\le K}n_{i}(\alpha_{j}+n_{j})h(\xx,\nn-\ee_{i}+\ee_{j})
-\sum_{1\le i\ne j\le K}n_{i}(\alpha_{j}+n_{j})h(\xx,\nn).
\end{align}
Hence we have
\[(\AA h (\cdot,\nn))(\xx) = (\B h(\xx,\cdot) ) (\nn)\]
where the right hand side  is \eqref{moran generator} applied to $h(\xx,\nn)$ as a function of $\nn$. The claim now follows from Proposition 1.2 in \cite{JK14}.
\end{proof}

Assign now prior $\nu_{0}=\pi_{\aalpha}$ to $X_{0}$, and assume categorical observations so that  $\P(Y=j|X_{t}=x)=x_{j}$. By the well-known conjugacy to Dirichlet priors, we have $X_{t}|Y=y\sim \pi_{\aalpha+\delta_{y}}$, where $\aalpha+\delta_{y}=(\alpha_{1},\ldots,\alpha_{j}+1,\ldots,\alpha_{K})$ if $y=j$. When multiple categorical observations with vector of multiplicities $\mm\in \Z_{+}^{K}$ are collected, we write $\pi_{\aalpha+\mm}$. The filtering algorithm then proceeds by first updating $\nu_{0}$ to $\nu_{0|0}:=\phi_{Y_{0}}(\nu_{0})=\pi_{\aalpha+\mm}$, if $Y_{0}=(Y_{0,1},\ldots,Y_{0,k})$ yields multiplicities $\mm$, then propagating $\nu_{0|0}$ to $\nu_{1|0}:=\psi_{\Delta}(\nu_{0|0})$.
In light of the previous result, an application of \eqref{forward propagation} to $\nu_{0|0}=\pi_{\aalpha+\mm}$ yields the mixture
\begin{equation}\label{WF propagation}
    \psi_{\Delta} \left(\pi_{\aalpha+\mm} \right) = \sum_{\nn: |\nn|=|\mm|} p_{\mm,\nn}(\Delta) \pi_{\aalpha+\nn},
\end{equation}
where $p_{\mm,\nn}(\Delta)$ are the transition probabilities of $N_{t}$ in Proposition \ref{prop: WF duality} over the interval $\Delta$. We then observe $Y_{1}|X_{1}$, which is in turn used to update $\nu_{1|0}$ to $\nu_{1|1}$, as so forth. We refer again the reader to \cite{KKK21}, Section 2.4.2, for details on qualitatively similar recursive formulae.

In \eqref{WF propagation}, the overall multiplicity $|\nn|$ equals the original $|\mm|$, as an effect of the population size preservation provided by the Moran model. The space $\{\nn: |\nn|=|\mm|\}$ is finite, which shows that Assumption 3 need not require the presence of a death-like process to have filtering distributions being finite mixtures. However, it is not obvious \emph{a priori} how \eqref{WF propagation} compares in terms of practical implementation with the different representation obtained in \cite{PR14}, namely 
\begin{equation}\label{PR14 WF propagation}
    \psi_{\Delta} \left(\pi_{\aalpha+\mm} \right) = \sum_{\nn: |\nn|\le |\mm|} \hat p_{\mm,\nn}(\Delta) \pi_{\aalpha+\nn}
\end{equation} 
where $\hat p_{\mm,\nn}(\Delta)$ are the transition probabilities of the death process on $\Z_{+}^{K}$ with rates \eqref{kingman's rate}. Similarly to what has already been discussed for the CIR case, the death process dual has a single ergodic state given by the origin $(0,\ldots,0)$, which entails the convergence of $\hat p_{\mm,\nn}(t)$ to $1$ if $\nn=(0,\ldots,0)$ and 0 elsewhere, implying the strong convergence $    \psi_t \left(\pi_{\aalpha+\mm} \right)\rightarrow \pi_{\aalpha}$ in \eqref{PR14 WF propagation}. This is ultimately determined by the fact that Kingman's coalescent removes lineages by coalescence and mutation until absorption to the empty set.

At first glance, a similar convergence is seemingly precluded  to \eqref{WF propagation}. However, we note in the first sum of \eqref{moran generator} that the new particle's type is either resampled from the survived particles or drawn from the baseline distribution, in which case the new particle is of type $j$ with (parent-independent) probability $\alpha_{j}/\sum_{i=1}^{K}\alpha_{i}$. Hence each particle will be resampled from the baseline distribution in finite time. 
Together with the fact that $\sum_{j=1}^{K} \pi_{\aalpha+\delta_{j}}\alpha_{j}/\sum_{i=1}^{K}\alpha_{i}=\pi_{\aalpha}$, which follows from Proposition G.9 in \cite{bGvdV}, and considering that the number of particles is finite, we can therefore expect that, as $t\rightarrow \infty$, we have the convergence $\psi_t \left(\pi_{\aalpha+\mm} \right)\rightarrow \pi_{\aalpha}$ in \eqref{WF propagation} also for this case.


The transition probabilities  $p_{\mm,\nn}(t)$  in \eqref{WF propagation}, induced by the Moran model, are not available in closed form. This poses a limit on the direct applicability of the presented algorithms for numerical experiments. The first alternative is then to approximate them by drawing $N$ points from the discrete distribution on the dual space before the propagation, making use of the Gillespie algorithm to draw as many paths, and evaluating the empirical distribution of the arrival points. Alternative approximations are suggested by the fact that an appropriately rescaled version of the Moran model converges in distribution to a WF diffusion (see, e.g., \cite{E09}, Lemma 2.39). Indeed, a spatial rescaling of the Moran model in \eqref{moran generator} to get proportions in place of multiplicities of types results in the generator 
\begin{equation}
\C_{|\nn|} f(\xx)
=\sum_{1\le i\ne j\le K}\frac{n_{i}}{|\nn|}\frac{\alpha_{j}+n_{j}}{|\nn|}\bigg[f\bigg(\xx-\frac{\ee_{i}}{|\nn|}+\frac{\ee_{j}}{|\nn|}\bigg)-f(\xx)\bigg],
\end{equation} 
where $x_{i}:=n_{i}/|\nn|$. A classical argument based on a Taylor expansion for $f$ now leads to write $|\nn|^{2}\C_{|\nn|} f=\AA f+O(|\nn|^{-1})$, with $\AA$ as in \eqref{WF generator} and where $O(|\nn|^{-1})$ represent a remainder term which goes to zero with $|\nn|^{-1}$. The claim then be based on classical arguments following, e.g., \cite{EK86}, Theorem 4.8.7.
We could therefore use a WF diffusion to approximate the Moran dual transitions in \eqref{WF propagation}. Since the spatially rescaled Moran model takes values $\mm/|\nn|$ with $\mm\in \M$ such that $0\le |\mm|\le |\nn|$, to the above end it suffices to discretize the states of the WF diffusion through binning, e.g., given a state $\xx$ of the approximating WF diffusion, we take as state of the Moran model the point $[|\nn|\xx]:=([|\nn|x_{1}],\ldots,[|\nn|x_{K}])$, where $[|\nn|x_{i}]$ is the approximation of $|\nn|x_{i}$ to the closest integer in $\{0,\ldots,|\nn|\}$. The functionals of interest can thus be evaluated through the same procedure which uses the original Moran model,  i.e., through \eqref{WF propagation}, based on the WF diffusion transition probabilities.
This strategy  in principle has the drawback of having to deal with the intractable terms $d_{m}(t)$ in the transition function expansion \eqref{WF transition} of the diffusion, hurdle overcome by adopting the solution proposed by \cite{JS17}.

It is also known that one could also construct a sequence of WF discrete Markov chains with non-overlapping generations indexed by the population size which, upon appropriate rescaling, converge weakly to the desired WF diffusion (see, e.g., \cite{KT81}, Sec.~15.2.F or \cite{E09}, Sec~4.1). Since two sequences that converge to the same limit can to some extent be considered close to each other, one could then consider a WF discrete chain indexed by $|\nn|$ with a parameterization that would make it converge to \eqref{WF generator}, and use it to approximate the Moran transition probabilities. This would permit a straightforward implementation, given WF discrete chains have multinomial transitions. In Section \ref{sec: WF experiments} we compare the performance of all the above mentioned strategies.

\section{Numerical experiments}\label{sec:experiments}

To illustrate how the above results can be used in practice and how they perform in comparison with other methods, we are going to consider particle approximations of the dual processes for evaluating their transition probabilities, which in turn are used in \eqref{weights rearrangement after propagation} in place of the true transition probabilities to evaluate the predictive distributions for the signal, denoted here $\hat p(x_{k+1}|y_{1:k})$. We compare these distributions with the exact predictive distribution obtained through the results in \cite{PR14} and those obtained through \emph{bootstrap particle filtering} which make use of the signal transition function. Particle filtering can be considered the state of the art for this type of inferential problems, a general reference being \cite{CP20}. The notable difference between these two approaches is that bootstrap particle filtering operates on the original state space of the signal, whereas filtering based on dual processes index the filtering mixtures using the dual state space, which in the present framework is discrete.

We first briefly describe the specific particle approximation on the dual space we are going to use.
To approximate a predictive distribution $\nu_{i|0:i-1}(x_{i})$, the classical particle approximation used in bootstrap particle filtering can be described as  follows:
\begin{list}{
$\bullet$
}{\itemsep=1mm\topsep=2mm\itemindent=-3mm\labelsep=2mm\labelwidth=0mm\leftmargin=9mm\listparindent=0mm\parsep=0mm\parskip=0mm\partopsep=0mm\rightmargin=0mm\usecounter{enumi}}
\setcounter{enumi}{0}
\item sample $X_{i-1}^{(m)} \overset{\text{iid}}{\sim} \nu_{i-1|0:i-1}$, $m = 1, \ldots, N$;
 \item propagate the particles by sampling $X_{i}^{(m)} \sim p_{t}(X_{i-1}^{(m)},\cdot)$, with $p_{t}$ the signal transition density;
 \item estimate $\nu_{i|0:i-1}$ with $\hat \nu_{i|0:i-1} := N^{-1}\sum_{m=1}^N \delta_{X_{i}^{(m)}}$.
\end{list}
For what concerns the use of dual processes, we are going to operate similarly to bootstrap particle filtering but on the dual space. The filtering distributions considered in this work are mixtures of the form
$
 \nu_{i-1|0:i-1}(x_{i-1}) = \sum\nolimits_{\mathbf{m} } w_\mathbf{m} h(x_{i-1}, \mathbf{m})\pi(x_{i-1}) 
$. An estimate of these can be obtained through a particle approximation of the discrete mixing measure, that is we draw $
  \mathbf{m}^{(n)} \overset{\text{iid}}{\sim}  \sum_{\mathbf{m} } w_\mathbf{m} \delta_{\mathbf{m}}$, $n=1,\ldots,N$,
  to obtain
$
\hat \nu_{i-1|0:i-1}(x_{i-1}) := N^{-1}\sum\nolimits_ {n=1}^N h(x_{i-1}, \mathbf{m}^{(n)})\pi(x_{i-1})$.
The natural approximation of $\nu_{i|0:i-1}(x_{i})$ is therefore as follows:
\begin{list}{
$\bullet$
}{\itemsep=1mm\topsep=2mm\itemindent=-3mm\labelsep=2mm\labelwidth=0mm\leftmargin=9mm\listparindent=0mm\parsep=0mm\parskip=0mm\partopsep=0mm\rightmargin=0mm\usecounter{enumi}}
\setcounter{enumi}{0}
\item sample $ \mathbf{m}^{(n)} \overset{\text{iid}}{\sim}  \sum_{\mathbf{m} } w_\mathbf{m} \delta_{\mathbf{m}}$;
 \item propagate the particles by sampling $\mathbf{n}^{(n)} \sim 
 p_{\mm^{(n)},\cdot}(t)$, with $ p_{\mm^{(n)},\cdot}(t)$ the transition probabilities of the dual process;
 \item estimate $\nu_{i|0:i-1}(x_{i})$ with  $\hat \nu_{i|0:i-1}(x_{i}) := N^{-1}\sum\nolimits_ {n=1}^N h(x_i, \mathbf{n}^{(n)})\pi(x_{i})$.
\end{list}

Here some important remarks are in order. 
The above dual particle approximation is a finite mixture approximation of a mixture which can be either finite or infinite. Hence the above strategy can be applied both to filtering given death-like duals but also given general duals on discrete state spaces. The quality of the dual particle approximation, in general, may differ from that obtained through the particle filtering approximation since the particles live on a discrete space in the first case and on a continuous space in the second. This is the object of the following sections, at least for two specific examples.
Finally, the ease of implementation of the two approximations may be very different because simulating from the original Markov process may be much harder than simulating from the dual process. An example is the simulation of Kingman's typed coalescent, immediate as compared to the simulation from \eqref{WF transition}, which would be unfeasible without \cite{JS17}.

\subsection{Cox--Ingersoll--Ross numerical experiments}\label{sec: CIR experiments}

The CIR diffusion admits two different duals:
\begin{list}{
$\bullet$
}{\itemsep=1mm\topsep=2mm\itemindent=-3mm\labelsep=2mm\labelwidth=0mm\leftmargin=9mm\listparindent=0mm\parsep=0mm\parskip=0mm\partopsep=0mm\rightmargin=0mm\usecounter{enumi}}
\setcounter{enumi}{0}
\item the death-like dual given by $D_{t}=(M_{t},\Theta_{t})$, with $M_{t}$ a pure-death process on $\Z_{+}$ with rates $2\sigma^{2}\theta m$ from $m$ to $m-1$ and $\Theta_{t}$ a deterministic process that solves the ODE $\d \Theta_{t}/\d t=-2\sigma^{2}\Theta_{t}(\Theta_{t}-\gamma/\sigma^{2})$, $\Theta_{0}=\theta$. Cf.~\cite{PR14}, Section 3.1.
 \item the B\&D dual $M_{t}$ on $\Z_{+}$ with birth rates from $m$ to $m+1$ given by $\lambda_m = 2\sigma^2 (\delta/2 + m)(\theta - \gamma /\sigma^2)$  and death rates from $m$ to $m-1$ given by $\mu_m = 2 \sigma^2 \theta m$ respectively. Cf.~Proposition \ref{prop: CIR duality}.
\end{list}

Note that the latter is time-homogeneous, the former is not. In general, temporal homogeneity is to be preferred since a direct simulation with a Gillespie algorithm in the inhomogeneous case would require a time-rescaling. However, for this specific case, there is a convenient closed-form expression for the transition density of the first dual, which can be used to simulate for arbitrary time transitions (see the third displayed equation at page 2011 in \cite{PR14}). The second dual, by virtue  of the temporal homogeneity, can be simulated directly using a Gillespie algorithm. This may be slow if the event rate becomes large, but as suggested in Section \ref{sec:CIR} we can see it as a linear B\&D process, and a convenient closed-form expression can be used to simulate arbitrary time transitions.

We compare these two particle approximations with an exact computation of the predictive distribution following \cite{PR14} and to a bootstrap particle filtering approach on the original state space of the signal, which is easy to implement for arbitrary time transitions thanks to the Gamma-Poisson expansion of the CIR transition density (see details in \cite{KKK21}, Section 5).

\begin{figure}[t!]
\begin{center}
\includegraphics[width = \textwidth]{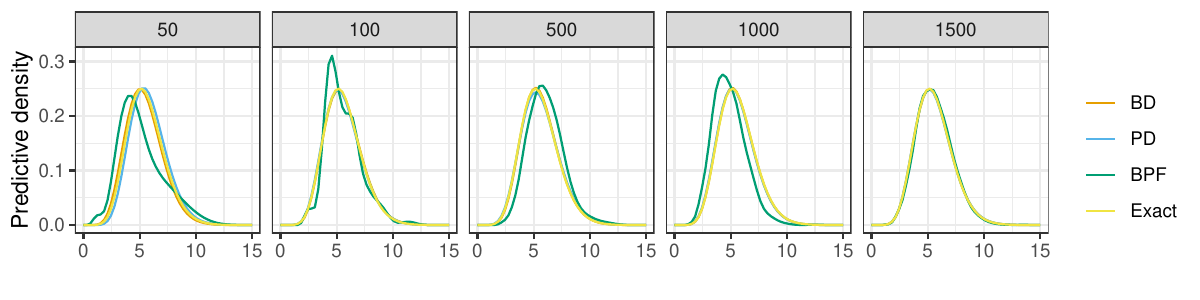}
\captionof{figure}{\footnotesize Comparison of the signal predictive distribution $\hat p(x_{k+1}|y_{1:k})$ obtained through the approximation approach to the death-process dual and the B\&D dual, and through the bootstrap particle filter, with the exact predictive. The number of particles used for the approximations are 50, 100, 500, 1000, 1500 and are indicated in the panel labels. The acronyms are BD: Birth-and-Death, PD: Pure-Death, BPF: Bootstrap Particle Filter. \label{fig:cv_pred_dist_CIR}}
\end{center}
\end{figure}

Figure \ref{fig:cv_pred_dist_CIR} shows the comparison of the above-illustrated strategies, with prediction performed for a forecast time horizon of 0.05. The CIR parameters were specified to $\delta = 11, \sigma = 1, \gamma = 1.1$. 
The starting distribution for the prediction is a filtering distribution for a dataset whose last Poisson observation equals 4, so the starting distribution is a mixture of Gamma densities roughly centred around this point. 
The density estimates for the bootstrap particle filter were obtained from a Gamma kernel density estimator with bandwidth estimated by cross-validation. This is expected to induce a negligible error because the target distribution is a finite mixture of Gamma distributions.

The figure suggests that the bootstrap particle filter is slower to converge to the exact predictive distribution. 
Instead, with only 50 particles, both dual approximations that use a pure-and a B\&D dual (respectively PD and BD in the Figure legend) are already almost indistinguishable from the exact predictive distribution. This shows that accurately approximating the mixing measure on the discrete dual space seems to require fewer particles than approximating the continuous distribution on the original continuous state space. Shorter and longer time horizons than that used in Figure \ref{fig:cv_pred_dist_CIR} were also tested and provided qualitatively similar results.

Next, we turn to investigating the error on the filtering distributions, which combines successive particle approximations. Since the update operation can be performed exactly through \eqref{update operation}, particle filtering using the dual process is conveniently implemented like a bootstrap particle approximation to a Baum-Welch filter with systematic resampling.
We quantify the error on the filtering distributions by measuring the absolute error on the first moment and the standard deviation of the filtering distributions (with respect to the exact computation). We also include the error on the signal retrieval, measured as the absolute difference between the first moment of the filtering distributions and the value of the hidden signal to be retrieved. The mean filtering error is averaged over the second half of the sequence of observations to avoid possible transient effects at the beginning of the observation sequence and estimated over 50 different simulated datasets. The parameter specification is again $\delta = 11, \sigma = 1, \gamma = 1.1$, with a single Poisson observation at each of 200 observation times, and intervals between consecutive observations equal to 0.1.
Figure \ref{fig:filtering_error_CIR} shows that the pure-death particle approximation performs better than the B\&D particle approximation, but the latter performs comparably to the bootstrap particle filter, possibly with a modest advantage.

\begin{center}
\includegraphics[width = \textwidth]{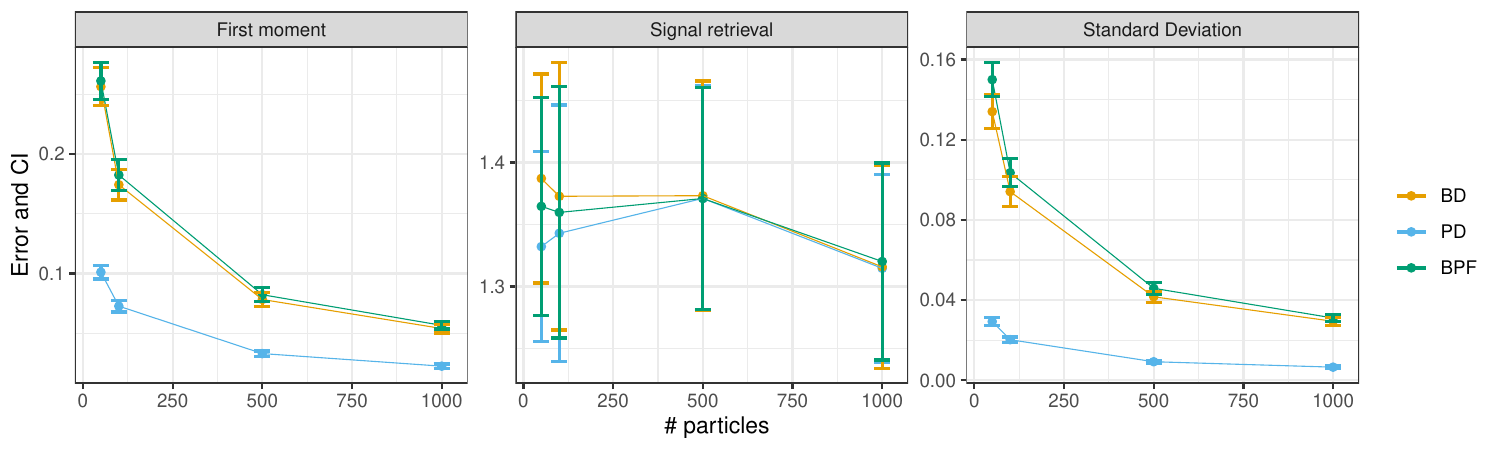}
\captionof{figure}{\footnotesize Mean filtering error as a function of the number of particles for the various particle approximation methods. The error bars represent the confidence interval on the error estimate from the 50 repetitions. The acronyms are BD: Birth-and-Death, PD: Pure-Death, BPF: Bootstrap Particle Filter.\label{fig:filtering_error_CIR}}
\end{center}

\subsection{Wright--Fisher numerical experiments}\label{sec: WF experiments}

The WF diffusion admits two different duals:
\begin{list}{
$\bullet$
}{\itemsep=1mm\topsep=2mm\itemindent=-3mm\labelsep=2mm\labelwidth=0mm\leftmargin=9mm\listparindent=0mm\parsep=0mm\parskip=0mm\partopsep=0mm\rightmargin=0mm\usecounter{enumi}}
\setcounter{enumi}{0}
\item Kingman's typed coalescent with mutation dual, given by a pure-death process on $\Z_{+}^{K}$ with rates $\lambda_{\mm, \mm-\ee_{i}} = m_i(|\boldsymbol \alpha| + | \boldsymbol m| -1)/2$ from $\mm$ to $\mm-\ee_{i}$. Cf.~\cite{PR14}, Section 3.3.
 \item a Moran dual process, given a homogeneous B\&D  process on $\Z_{+}^{K}$ with rates $\lambda_{\mm,\mm-\ee_{i}+\ee_{j}} = m_i(\alpha_j + m_j)/2$ from $\mm$ to $\mm-\ee_{i}+\ee_{j}$. Cf.~Proposition \ref{prop: WF duality}.
\end{list}

Here both processes are temporally homogeneous and can thus be easily simulated using a Gillespie algorithm, with the only caveat that the simulation can be inefficient when the infinitesimal rates are large. Similar to the CIR case, there is a closed-form expression for the transition probabilities in the first case, which can be used for simulation purposes for arbitrary time transitions (see Theorem 3.1 in \cite{PRS16}). Unlike the one-dimensional CIR case, handling this expression is challenging in the multi-dimensional WF case, with significant numerical stability issues raised by the need to compute the sum of alternated series with terms that can both overflow and underflow. In \cite{KKK21}, these hurdles were addressed using arbitrary precision computation libraries and careful re-use of previous computations applicable when data is equally spaced.
The Gillespie simulation strategy presents no such restriction and may be significantly faster when the event rates remain low.

As mentioned in Section \ref{sec:WF}, no closed-form expression is available for the Moran dual and the Gillespie algorithm approach is the main option, likely resulting in a slow algorithm. Alternatively, as argued in Section \ref{sec:WF}, we can approximate the Moran dual process by a finite population Wright--Fisher chain, with the quality of approximation increasing with the population size.
The interest in this approximation is that the event rate for the latter is lower than for the Moran process. This is related to the fact that weak convergence of a sequence of WF chains to a WF diffusion occurs when time is rescaled by a factor of $N$ (cf.~\cite{KT81}, Sec.~15.2.F), whereas a Moran model whose individual updates occur at the times of a Poisson process with rate 1, needs a rescaling by a factor $N^{2}$ to obtain a similar convergence.  In other words, in order to establish weak convergence to the diffusion, time must be measured in units of $N$ generations in the WF chain and in units of $N^{2}$ generations in the Moran model. See discussion in Section \ref{sec:WF}.
For this reason, the resulting Gillespie simulation is expected to be faster using a WF chain approximation to the Moran model. 

The above considerations also suggest another possibility. Since the Moran process converges weakly to a Wright--Fisher diffusion, the latter could also be used as a possible approximation instead of a WF chain. In this case, it is possible to sample directly from \eqref{WF transition} for arbitrary  time transitions using the algorithm in \cite{JS17}. Hence we would be using a WF diffusion to approximate the dual Moran transitions in \eqref{WF propagation}. 

A standard bootstrap particle filter performed directly on the Wright--Fisher diffusion state space also crucially relies on the algorithm of \cite{JS17} for the prediction step, without which approximate sampling recipes from the transition density would be needed. 

In Figure \ref{fig:cv_pred_dist_WF}, we compare prediction strategies for a  WF diffusion with $K=4$ types using:
\begin{list}{
$\bullet$
}{\itemsep=1mm\topsep=2mm\itemindent=-3mm\labelsep=2mm\labelwidth=0mm\leftmargin=9mm\listparindent=0mm\parsep=0mm\parskip=0mm\partopsep=0mm\rightmargin=0mm\usecounter{enumi}}
\setcounter{enumi}{0}
\item the closed-form transition of the pure death dual (``Exact'' in Fig.~\ref{fig:cv_pred_dist_WF} legend);
\item an approximation of the pure death dual using a Gillespie algorithm (``PD'');
 \item an approximation of the Moran dual using a Gillespie algorithm (``BD Gillespie Moran'');
 \item a WF chain approximation of the Moran dual using a Gillespie algorithm (``BD Gillespie WF'');
 \item a WF diffusion approximation of the Moran dual using \cite{JS17} (``BD diffusion WF'');
 \item a bootstrap particle filtering approximation using \cite{JS17} (``Bootstrap PF'').
\end{list}

In Figure \ref{fig:cv_pred_dist_WF}, prediction was performed for a forecast time horizon equal to 0.1, with WF parameters $\aalpha = (3, 3, 3, 3)$. 
The starting distribution for the prediction is a filtering distribution for a dataset whose last multinomial observation is equal to $(4, 0, 9, 2)$ (so the starting distribution is a mixture of Dirichlet distributions roughly centred around $(4/15, 0, 9/15, 2/15)$). 
Various values for parameter $\aalpha$ were also tested and provided results qualitatively similar to Figure \ref{fig:cv_pred_dist_CIR}.
The density estimates for the bootstrap particle filter are obtained from a Dirichlet kernel density estimator with bandwidth estimated by cross-validation (using Julia package KernelEstimators \url{https://github.com/panlanfeng/KernelEstimator.jl}). This is expected to induce a negligible error because the target distribution is a finite mixture of Dirichlet distributions.
Figure \ref{fig:cv_pred_dist_WF} shows that among these particle approximations of $p(x_{k+1}|y_{1:k})$, the Wright--Fisher diffusion approximation of the Moran dual seems to converge slowest, followed by the bootstrap particle filter, whereas the other strategies based on the dual process converge quickly to the exact distribution.

\begin{center}
\includegraphics[width = \textwidth]{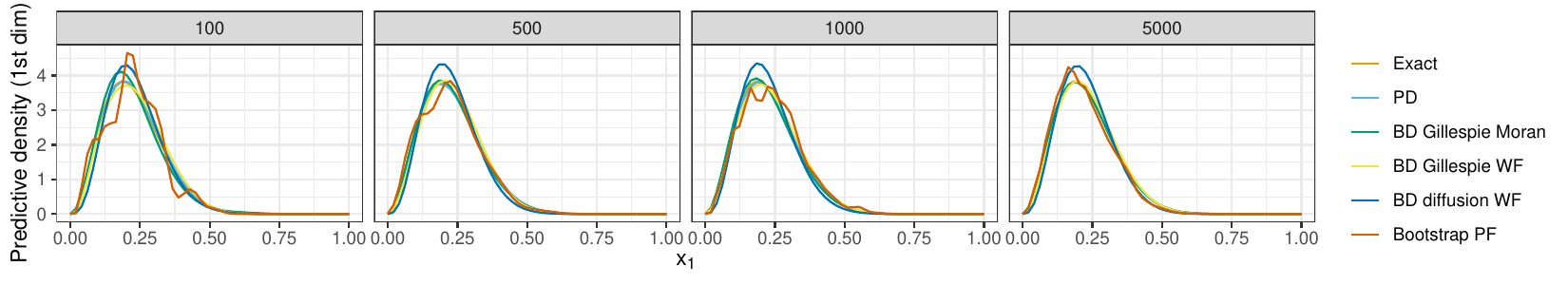}
\captionof{figure}{\footnotesize Convergence of the WF predictive distribution (only the first dimension) with the number of particles for the various particle approximations. The acronyms are PD: Pure-Death, BD: Birth-and-Death, WF: Wright-Fisher, PF: Particle Filter. \label{fig:cv_pred_dist_WF}}
\end{center}

Figure \ref{fig:filtering_error_WF} evaluates the filtering error for a  WF process with $K = 3$ and parameters $\aalpha = (1.1, 1.1, 1.1)$, given 20 categorical observations collected at each time, over 10 collection times spaced by intervals equal to 1. We consider increasing numbers of particles and use 100 replications to estimate the error.
The figure shows that the particle approximation of the pure death dual process using the closed-form transition exhibits better performance.
The bootstrap particle approximation has the fastest improvement relative to increasing the number of particles. Overall, the Moran dual performs better or comparably to bootstrap particle filtering. 

\begin{center}
\includegraphics[width = \textwidth]{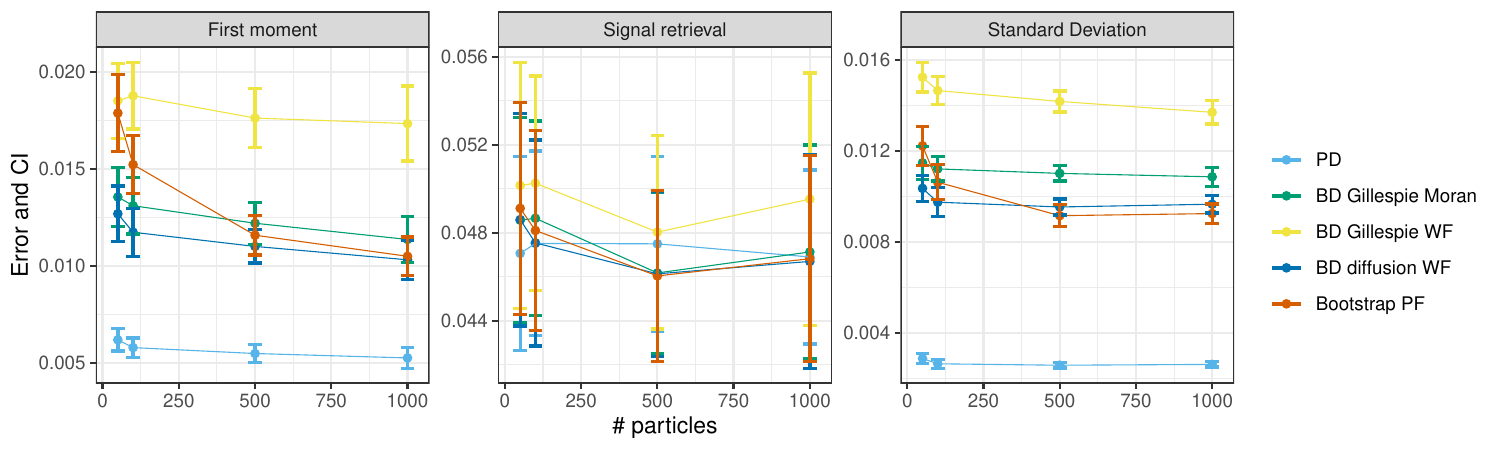}
\captionof{figure}{\footnotesize Mean filtering error as a function of the number of particles for the various particle approximation methods. The error bars represent the confidence interval on the error estimate from the 100 repetitions. The acronyms are PD: Pure-Death, BD: Birth-and-Death, WF: Wright-Fisher, PF: Particle Filter.\label{fig:filtering_error_WF}}
\end{center}


\section{Concluding remarks}

We have provided conditions for filtering diffusion processes on multidimensional continuous spaces which avoid computations on the state space of the forward process when a dual process given by a discrete Markov chain is available. Motivated by certain diffusion models for which only duals with a countable state space are known (e.g., B\&D-like duals for WF diffusions with selection), we have investigated the performance of filtering based on a B\&D dual for the CIR diffusion and based on a Moran process dual for the WF diffusion. All approximation methods proposed appear to be valuable strategies, despite resting on different simulation schemes. The optimal strategy is bound to depend on the application at hand, together with several other details like the interval lengths between data collection times, and possibly be constrained by which of these tools are available. For example, the transition function of coupled WF diffusions \cite{Aea19} is not available, whereas a discrete dual was found in \cite{Fea21}. Overall, approximate filtering using B\&D-like duals may perform better or comparably to bootstrap particle filtering, with the advantage of operating on a discrete state space. The computational effort for each of these strategies  is also bound to depend on a series of factors the identification of which is beyond the scope of this contribution.

The code to reproduce the analyses illustrated above will be made available in the Supporting Material and is based on the package freely available at {\small \url{https://github.com/konkam/DualOptimalFiltering.jl}}.


\section{Acknowledgements}

The authors are grateful to two anonymous referees for carefully reading the manuscript and for providing constructive suggestions that led to improving the paper. They also gratefully acknowledge insightful conversations with Omiros Papaspiliopoulos on performing particle filtering on the dual space. \\
The last author is partially supported by MUR, PRIN project 2022CLTYP4.

\end{document}